\newtheorem{theorem}{Theorem}[section]
\newtheorem{definition}{Definition}[section]
\newcommand{\pp}[2]{\frac{\partial #1}{\partial #2}}
\DeclareMathOperator{\diff}{d}
\newcommand{\ed}{\ensuremath{\mathrm{d}}}
\newcommand{\ddiv}{\ensuremath{\mathrm{div}}} 
\newcommand{\mc}[1]{\ensuremath{\mathcal{#1}}}
\newcommand{\mb}[1]{\ensuremath{\mathbf{#1}}}
\newcommand{\mr}[1]{\ensuremath{\mathrm{#1}}}
\newcommand{\jump}[1]{\left[\!\left[ #1 \right]\!\right]}
\newcommand{\ie}[0] {\emph{i.e.}~}
\title{Compatible finite element spaces for geophysical fluid dynamics}
\author{A. Natale, J. Shipton and C. J. Cotter }
\affil{Department of Mathematics, Imperial College London, London, SW7 2AZ, UK}
\date{\today}
\begin{document}
\maketitle

\begin{abstract}
  Compatible finite elements provide a framework for preserving
  important structures in equations of geophysical fluid dynamics, and
  are becoming important in their use for building atmosphere and
  ocean models. We survey the application of compatible finite
  element spaces to geophysical fluid dynamics, including the
  application to the nonlinear rotating shallow water equations, and
  the three-dimensional compressible Euler equations. We summarise
  analytic results about dispersion relations and conservation
  properties, and present new results on approximation properties in
  three dimensions on the sphere, and on hydrostatic balance
  properties.
\end{abstract}

\noindent Keywords: Numerical weather prediction,
Mixed finite elements,
Numerical analysis

\section{Introduction}

Finite element methods have recently become a popular
discretisation approach for numerical weather prediction, mostly using
spectral elements or Discontinuous Galerkin methods
\cite{fournier2004spectral,thomas2005ncar,dennis2011cam,kelly2012continuous,kelly2013implicit,marras2013simulations,brdar2013comparison,bao2015horizontally};
the use of these methods is reviewed in
\cite{marras2015review}. Finite element methods provide the
opportunity to use more general grids in numerical weather prediction,
either to improve load balancing in massively parallel computation, or
to facilitate adaptive mesh refinement. They also allow the
development of higher order discretisations.  Compatible finite
element methods are built from finite element spaces that have
differential operators that map from one space to another.  They have
a long history in both numerical analysis (see \cite{boffi2013mixed}
for a summary of contributions) and applications, including
aerodynamics, structural mechanics, electromagnetism and porous media
flows. Until recently, they have not been considered much in the
numerical weather prediction context, although the lowest
Raviart-Thomas element (denoted $RT_0$) has been proposed for ocean
modelling \cite{walters1998robust}, and the $RT_0$ and lowest order
Brezzi-Douglas-Marini element (denoted $BDM_1$) were both analysed in
\cite{rostand2008raviart} as part of the quest for a mixed finite
element pair with good dispersion properties when applied to
geophysical fluid dynamics. The realisation in \cite{CoSh2012} that
compatible finite element methods have steady geostrophic modes when
applied to the linear shallow water equations, combined with other
properties that make them analogous to the Arakawa C-grid staggered
finite difference method has led to an effort to develop these methods
for numerical weather prediction, including the Gung Ho dynamical core
project in the UK.

In this paper, we review recent work on the development of compatible finite element methods for numerical weather prediction, concentrating on theoretical results about stability, accuracy and conservation properties. In particular, we restrict ourselves to primal-only formulations, \emph{i.e.}\ involving a single mesh, on simplicial or tensor product elements, although the use of primal-dual formulation has also been explored in the context of atmosphere simulations using arbitrarily polygonal meshes \cite{thuburn2015primal}. 
Within this survey, we also include some new results, listed below.
\begin{itemize}
\item Inclusion of the consideration of inertial modes in the analysis
  of numerical solutions of the linear rotating shallow water equations on
  the plane.
\item An extension of recent results on approximation theory to
  triangular prism elements, including approximation properties when
  solving problems in an ``atmosphere-shaped'' spherical shell domain.
  These are listed below.
  \begin{enumerate}
\item For arbitrary non-affine prismatic meshes, there is a loss of
  consistency of at least 1 degree for $H(\mr{curl})$ elements and 
  for $H(\mr{div})$ elements (which are used for velocity in this framework),
  and 2 degrees for $L^2$ elements (which are used for
  density/pressure in this framework). The consistency loss increases
  with the polynomial order.
\item If the prismatic elements are arranged in columns, but with
  arbitrary slopes on top and bottom faces of each element, then the
  loss of consistency only affects the polynomial spaces in the horizontal direction. 
\item If the mesh can be obtained from a global
  continuously-differentiable transformation from a smooth
  triangulation of the sphere extruded into uniform layers, then the
  expected consistency order of the chosen finite element spaces is
  obtained. This would arise in the case of a terrain-following mesh
  with smooth terrain, for example.
\item The latter result also applies to a cubed sphere mesh under the
  same conditions.
\end{enumerate}
\item An analysis of hydrostatic balance in the discretisation of
  three-dimensional compressible Euler equations.
\end{itemize}

The rest of this paper is organised as follows. In
Section \ref{sec:Hilbert complexes} we introduce the concept of
Hilbert complex and establish the notation.
In Section \ref{sec:2D}, we describe compatible
finite elements in two dimensions and review some relevant analytical
results. In Section \ref{sec:linear fem} we review how these results
are applied to the linear rotating shallow water equations (including
a proper analysis of inertial modes). In Section \ref{eq:nonlinear sw}
we review how compatible finite element methods can be used to build
discretisations of the nonlinear rotating shallow water equations with
conservation properties. In Section \ref{sec:3d} we describe how to
extend compatible finite element spaces to three dimensions for the
columnar meshes used in numerical weather prediction (including new
results on approximation properties for compatible finite element
spaces on columnar meshes). In Section \ref{sec:temp} we describe how
to treat the discretisation of temperature within this framework
(including new results on existence and uniqueness of hydrostatic
balance).  Finally, we give a summary and outlook in Section
\ref{sec:sum}.

\section{Hilbert complexes}\label{sec:Hilbert complexes}
In this section we introduce the concept of Hilbert complex, which is the main ingredient to construct compatible finite element spaces, and we establish the notation for the Sobolev spaces used throughout the paper.

 
Let $\Omega$ be an $n$-dimensional domain with $n\leq3$, or in general a bounded Riemmannian manifold with or without boundary. Scalar and vector function spaces on
$\Omega$ can be collected in a diagram which has the following form:
\begin{equation}\label{eq:gcomplex}
\begin{tikzcd}[column sep=2em]
0 \arrow{r}&
V^0(\Omega) \arrow{r}{\ed^0}&
V^1(\Omega) \arrow{r}{\ed^1}&
\ldots \arrow{r}{\ed^{n-1}}&
V^n(\Omega) \arrow{r}& 0\, ,
\end{tikzcd}
\end{equation}
where $V^k(\Omega)$ is a scalar function space for $k=0,n$ and a vector function space for $0<k<n$. The operators $d^k$ for $k=0,\ldots,n-1$ are derivative operators, \emph{i.e.}\ they satisfy the Leibniz rule with respect to an opportunely defined product, and they verify $d^k \circ d^{k-1}=0$ for all $k=1,\ldots,n-1$. If the spaces $V^k(\Omega)$ are Hilbert spaces, than the sequence in \eqref{eq:gcomplex} is a \emph{Hilbert complex}.

If $n=1$,  $\Omega$ is locally isomorphic to an open interval in $\mathbb{R}$. If $x$ denotes a local coordinate on $\Omega$, we choose as Hilbert complex on $\Omega$ the sequence 
\begin{equation}\label{eq:1com}
\begin{tikzcd}[column sep=2em]
0\arrow{r}&
H^1(\Omega)\arrow{r}{\frac{\ed}{\ed x}} &
L^2(\Omega) \arrow{r}& 0\, .
\end{tikzcd}
\end{equation}
For $n=2$, we consider the Hilbert complex
\begin{equation}\label{eq:2com}
\begin{tikzcd}[column sep=2em]
0\arrow{r}&
H^1(\Omega)\arrow{r}{\nabla^\perp} &
H(\mr{div};\Omega)\arrow{r}{\nabla\cdot}  &
L^2(\Omega) \arrow{r}& 0\, .
\end{tikzcd}
\end{equation}
Finally, for $n=3$, we consider the Hilbert complex
\begin{equation}\label{eq:3com}
\begin{tikzcd}[column sep=2em]
0\arrow{r}&
H^1(\Omega)\arrow{r}{\nabla} &
H(\rm{curl};\Omega)\arrow{r}{\nabla\times}&
H(\rm{div}; \Omega) \arrow{r}{\nabla\cdot}&
L^2(\Omega) \arrow{r}&
0 \, .\end{tikzcd}
\end{equation}
From a numerical perspective reproducing such complex structures is key in achieving a number of properties which are crucial for geophysical fluid dynamics simulations, as it will be shown in the following sections.

It will be useful to have a unified notation to denote the spaces and the differential operators in the diagrams in \eqref{eq:1com}, \eqref{eq:2com} and \eqref{eq:3com}. For this purpose we introduce the following diagram,
\begin{equation}\label{eq:hcom}
\begin{tikzcd}[column sep=2em]
0\arrow{r}&
HV^0(\Omega)\arrow{r}{\ed^0} &
HV^1(\Omega)\arrow{r}{\ed^1}  &
\ldots\arrow{r}{\ed^{n-1}}  &
HV^n(\Omega) \arrow{r}& 0\, ,
\end{tikzcd}
\end{equation}
where the spaces $HV^k(\Omega)$ is the $k^{th}$ space in any of the sequences in \eqref{eq:1com}, \eqref{eq:2com} or \eqref{eq:3com}, depending on the dimension $n$ of $\Omega$. We denote by $\| \cdot \|_{L^2(\Omega)}$ (or simply $\|\cdot\|_{L^2}$ when the domain is clear from the context) the $L^2$ norm on any of these spaces, and we do not change notation depending on whether $V^k(\Omega)$ contains scalar or vector
functions. 

The spaces $HV^k(\Omega)$, are Hilbert spaces with respect to the norm  $\| \cdot \|_{HV^k(\Omega)}$, which is defined by
\begin{equation}
 \| u \|_{HV^k(\Omega)}^2 := \| u\|^2_{L^2(\Omega)} +\| \ed^k u\|^2_{L^2(\Omega)} \, ,
\end{equation}
for all $u\in HV^k(\Omega)$. For example, we have for $n=2,3$, 
\begin{equation}
 \| u \|_{HV^{n-1}(\Omega)}^2 =  \| u \|^2_{H(\mr{div};\Omega)}   := \| u\|^2_{L^2(\Omega)} +\| \nabla \cdot u\|^2_{L^2(\Omega)} \, .
\end{equation}
Furthermore, we define  $H^s V^k(\Omega)$ to be the spaces of scalar or vector functions with components in $H^s(\Omega)$. The norm associated to any of these spaces is defined by summing up the norms of each component in $H^s(\Omega)$. Analogous definitions hold for the spaces $W^s_pV^k (\Omega)$.

\section{Compatible finite element spaces in two dimensions}
\label{sec:2D}
In this section we provide a brief introduction to to compatible
finite element spaces in two dimensions and review some of their
properties. We start with a definition.
\begin{definition}
  Let $V_h^0(\Omega)\subset H^1(\Omega)$, $V_h^1(\Omega) \subset H(\ddiv;\Omega)$, $V_h^2(\Omega)
  \subset L^2(\Omega)$ be a sequence of finite element spaces ($V_h^0(\Omega)$
  and $V_h^2(\Omega)$ contain scalar valued functions, whilst $V_h^1(\Omega)$ contains
  vector valued functions). These spaces are called \emph{compatible}
  if:
  \begin{enumerate}
  \item $\nabla^\perp\psi \in V_h^1(\Omega)$, $\forall \psi \in V_h^0(\Omega)$,
  \item $\nabla\cdot u \in V_h^2(\Omega)$, $\forall u \in V_h^1(\Omega)$, and
  \item there exist bounded projections $\pi^i$, $i=0,1,2$,
    such that the following diagram commutes.
     \begin{equation}
     \begin{tikzcd}[column sep=2em]
       H^1(\Omega)\arrow{r}{\nabla^\perp}\arrow{d}{\pi^0} &
       H(\rm{div}; \Omega) \arrow{r}{\nabla\cdot}\arrow{d}{\pi^1}&
       L^2(\Omega) \arrow{d}{\pi^2} \\
       V_h^0(\Omega) \arrow{r}{\nabla^\perp}&
       V_h^1(\Omega) \arrow{r}{\nabla\cdot}&
       V_h^2(\Omega)
     \end{tikzcd}
     \end{equation}
  \end{enumerate}
\end{definition}

The use of these commutative diagrams to prove stability and
convergence of mixed finite element methods for elliptic problems has
a long history, as detailed in \cite{boffi2013mixed}. More recently,
this structure has been translated into the language of differential
forms. This provides a unifying framework that relates properties
between the various spaces and for different dimensions, called
\emph{finite element exterior calculus} \cite{Arnold06,Arnold10}. In
computational electromagnetism, the term \emph{discrete differential
  forms} is used to denote this choice of finite element spaces
\cite{hiptmair2002finite}.  Here, since we use the language of vector
calculus, we use the term \emph{compatible finite element spaces}.

If the domain $\Omega$ has a boundary $\partial\Omega$, then we define
$\mathring{H}^1(\Omega)$ and $\mathring{V}_h^0(\Omega)$ as the subsets of
$H^1(\Omega)$ and $V_h^0(\Omega)$ where functions vanish on
$\partial\Omega$, respectively.  Similarly, we define
$\mathring{H}(\ddiv;\Omega)$ and $\mathring{V}_h^1(\Omega)$ as the
subsets of $H(\ddiv;\Omega)$ and $V_h^1(\Omega)$ where the normal
components of functions vanish on $\partial\Omega$,
respectively. Then, the above commutative diagram still holds with
subspaces appropriately substituted. Since these are the required
coastal boundary conditions for streamfunction and velocity, we shall
assume that we use these subspaces throughout the rest of the paper,
but shall drop the $\mathring{\cdot}$ notation for brevity.

A large number of these sets of spaces can be found in the Periodic Table
of Finite Elements \cite{arnold2014periodic}. Examples include:
\begin{itemize}
\item $(CG_k,RT_{k-1},DG_{k-1})$ on triangular meshes for $k>0$, where
  $CG_k$ denotes the continuous Lagrange elements of degree $k$,
  $RT_k$ denotes the Raviart-Thomas elements of degree $k$, and $DG_k$
  denotes the discontinuous Lagrange elements of degree $k$. (This corresponds
  to the $\mathcal{P}_k^-\Lambda^r$ families in the finite element exterior
  calculus.)
\item $(CG_k,BDM_{k-1},DG_{k-2})$ on triangular meshes for $k>1$,
  where $BDM_k$ is the Brezzi-Fortin-Marini element of degree
  $k$. (This corresponds to the $\mathcal{P}_k\Lambda^r$ families in
  the finite element exterior calculus.)
\item $(CG_k,RT_{k-1},DG_{k-1})$ on quadrilateral meshes for $k>0$,
  where $RT_k$ now denotes the quadrilateral Raviart-Thomas
  elements. (This corresponds to the $\mathcal{Q}^-_k\Lambda^r$ spaces
  in the finite element exterior calculus \cite{Arnold14}.)
\end{itemize}

Whilst explicit constructions of the triangular mesh examples above
exist on affine meshes, the approach that is usually taken
computationally is to define these elements on a canonical reference
cell (triangle or square) and then to define elements on each physical
cell \emph{via} the pullback mappings listed in Table~\ref{tab:Fstar}. More precisely, if $F_T$ is a mapping from the
reference cell $\hat{T}$ to the physical cell $T$, then finite element
functions are related \emph{via}
\begin{itemize}
\item $\psi \in V_h^0(\hat{T}) \implies \psi\circ F_T^{-1} \in V_h^0(T)$,
\item $u \in V_h^1(\hat{T}) \implies Ju/\det(J) \circ F_T^{-1} \in V_h^1(T)$,
\item $\rho \in V_h^2(\hat{T}) \implies \rho/\det(J) \circ F_T^{-1} \in V_h^2(T)$,
\end{itemize}
where $J$ is the Jacobian matrix $DF_T$. More precisely, given a decomposition $\mc T_h$ of the domain $\Omega$ in elements $T$, with maximum element diameter $h$, the finite element space $V^k_h(\Omega)$ on $\mc T_h$ is defined by requiring $V^k_h(T) = F^{-1*}_T(V^k_h(\hat{T}))$ and $V^k_h(\Omega)\subset HV^k(\Omega)$.
The last requirement enforces the correct inter-element continuity accordingly to the degree $k$ in the complex.

The transformation for $V_h^1(\Omega)$ is referred to as the contravariant Piola transformation for $H(\ddiv)$
elements. Note that for $V_h^1(\Omega)$, scaling of edge basis functions is also
necessary to achieve the correct inter-element continuity; see
\cite{rognes2009efficient} for details of how to implement this
efficiently. This transformation can be extended to surfaces embedded
in three dimensions so that the $V_h^1(\Omega)$ vector fields remain tangential
to the surface; see \cite{rognes2013automating} for details of
efficient implementation, including examples on the sphere.

It is also useful to define dual operators to the $\nabla^\perp$ and
$\nabla\cdot$ operators through integration by parts; these allow us
to approximate the curl of functions in $V_h^1$ and the gradient of functions
in $V_h^2$.
\begin{definition}\label{def:nablat}
  Let $v \in V_h^1(\Omega)$, $\rho \in V_h^2(\Omega)$. Then define
  $\tilde{\nabla}^\perp\cdot v \in V_h^0(\Omega)$, and $\tilde{\nabla}\rho \in
  V_h^1(\Omega)$ \emph{via}
  \begin{align}
    \int_\Omega \gamma \tilde{\nabla}^\perp\cdot v \diff x & =
    -\int_\Omega \nabla^\perp\gamma\cdot v \diff x, \quad \forall \gamma \in V_h^0(\Omega), \\
    \int_\Omega w \cdot \tilde{\nabla}\rho \diff x & =
    -\int_\Omega \nabla\cdot w \rho \diff x, \quad \forall w \in V_h^1(\Omega).
  \end{align}
\end{definition}

A very important result about compatible finite element spaces is the existence of a Helmholtz decomposition for $V_h^1(\Omega)$.
\begin{theorem}[Helmholtz decomposition]\label{th:hdec}
  Let $\Omega$ be a suitably smooth (see \cite{Arnold10} for details)
  domain, and let $V_h^0(\Omega)$, $V_h^1(\Omega)$, $V_h^2(\Omega)$ be compatible finite element
  spaces on $\Omega$. Define the subspaces
  \begin{align}
  B_h &= \{u\in V^1_h(\Omega)\,:\, u=\nabla^\perp\psi, \, \psi \in V_h^0(\Omega)\}, \\
  B^*_h &= \{u\in V^1_h(\Omega)\,:\, u=\tilde{\nabla}\phi, \, \phi \in V_h^2(\Omega)\}, \\
  \mathfrak{h}_h & = \{ u\in V^1_h(\Omega)\,:\, \nabla\cdot{u}=0, \, \tilde{\nabla}^\perp\cdot
  u=0\}.
  \end{align}
  Then,
  \begin{equation}
  V_h^1(\Omega) = B_h \oplus \mathfrak{h}_h \oplus B^*_h,
  \end{equation}
  where $\oplus$ indicates an orthogonal decomposition with respect to
  the $L^2$ inner product.\\ Further, the space $\mathfrak{h}_h$ of
  harmonic functions of $V_h^1(\Omega)$ has the same dimension as the
  corresponding space $\mathfrak{h}$ of harmonic functions of
  $H(\ddiv;\Omega)$ (which is determined purely from the topology of  $\Omega$).
\end{theorem}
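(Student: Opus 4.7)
The plan is to establish the theorem in three stages: $L^2$-orthogonality of $B_h$, $\mathfrak{h}_h$, and $B_h^*$; existence of the decomposition for an arbitrary $u\in V_h^1(\Omega)$; and the dimensional identity $\dim\mathfrak{h}_h = \dim\mathfrak{h}$.

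For the first stage, the definitions of $\tilde{\nabla}^\perp\cdot$ and $\tilde{\nabla}$ in Definition~\ref{def:nablat} make them the $L^2$-adjoints of $\nabla^\perp$ and $-\nabla\cdot$ between the relevant discrete spaces, so the three pairwise orthogonality statements are almost immediate. For $B_h\perp B_h^*$, one computes
$\int_\Omega \nabla^\perp\psi\cdot \tilde{\nabla}\phi\,\ed x = -\int_\Omega \nabla\cdot(\nabla^\perp\psi)\,\phi\,\ed x = 0$
using the pointwise continuous identity $\nabla\cdot\nabla^\perp\equiv 0$. For $B_h\perp\mathfrak{h}_h$, the definition of $\tilde{\nabla}^\perp\cdot$ gives $\int_\Omega\nabla^\perp\psi\cdot u\,\ed x = -\int_\Omega \psi\,\tilde{\nabla}^\perp\!\cdot\!u\,\ed x = 0$ since $\tilde{\nabla}^\perp\!\cdot\!u = 0$ for $u\in\mathfrak{h}_h$, and the case $B_h^*\perp\mathfrak{h}_h$ is analogous, using $\nabla\cdot u=0$.

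For the decomposition, I would let $u_1$ and $u_3$ be the $L^2$-orthogonal projections of a given $u\in V_h^1(\Omega)$ onto the finite-dimensional subspaces $B_h$ and $B_h^*$, and set $u_2 := u-u_1-u_3$. By the pairwise orthogonality from stage one, $u_2$ is $L^2$-orthogonal to both $B_h$ and $B_h^*$. Testing this orthogonality against $\nabla^\perp\gamma$ for all $\gamma\in V_h^0(\Omega)$ yields $\tilde{\nabla}^\perp\!\cdot u_2 = 0$, and testing against $\tilde{\nabla}\rho$ for all $\rho\in V_h^2(\Omega)$ gives $\int_\Omega (\nabla\cdot u_2)\,\rho\,\ed x = 0$; since $\nabla\cdot u_2\in V_h^2(\Omega)$, the choice $\rho=\nabla\cdot u_2$ forces $\nabla\cdot u_2=0$, so $u_2\in\mathfrak{h}_h$, establishing the sum decomposition.

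The main obstacle is the dimensional identity $\dim\mathfrak{h}_h=\dim\mathfrak{h}$, for which I would invoke the bounded commuting projection $\pi^1$. Given $u\in\mathfrak{h}$, the commuting square gives $\nabla\cdot \pi^1 u = \pi^2 \nabla\cdot u = 0$; composing with the discrete Helmholtz projector onto $\mathfrak{h}_h$ from stages one and two defines a linear map $R:\mathfrak{h}\to\mathfrak{h}_h$. The crux is proving $R$ is an isomorphism. Injectivity would follow by arguing that if $Ru=0$ then $\pi^1 u = \nabla^\perp\psi_h + \tilde{\nabla}\phi_h$; combining this with the continuous orthogonality $u\perp B$, the approximation estimate $\|u-\pi^1 u\|_{L^2}\to 0$ on sufficiently regular harmonic fields, and a Poincar\'e-type bound on divergence-free fields orthogonal to $\mathfrak{h}$, one forces $u=0$. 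Surjectivity follows symmetrically by using the inclusion $V_h^1(\Omega)\subset H(\ddiv;\Omega)$ and the continuous Helmholtz decomposition to produce a candidate preimage in $\mathfrak{h}$ for each element of $\mathfrak{h}_h$. This is the ``harmonic forms are isomorphic to cohomology'' phenomenon of finite element exterior calculus, and rather than reconstruct the detailed estimates I would appeal to the framework of \cite{Arnold10}, which packages exactly this isomorphism as a consequence of the existence of a bounded cochain projection.
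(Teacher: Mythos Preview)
Your proposal is correct and in fact provides considerably more than the paper does: the paper's proof consists solely of the reference ``See \cite{Arnold06}''. The three-stage argument you outline---pairwise $L^2$-orthogonality from the adjoint definitions, decomposition via orthogonal projection onto $B_h$ and $B_h^*$, and the isomorphism $\mathfrak{h}\cong\mathfrak{h}_h$ induced by the bounded cochain projection---is precisely the standard finite element exterior calculus argument developed in \cite{Arnold06,Arnold10}, so your sketch is faithful to the cited source even though the paper itself does not reproduce it.
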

\begin{proof}
  See \cite{Arnold06}.
\end{proof}

\subsection{Mixed discretisation for the Poisson equation}
The mixed discretisation of the Poisson equation
\begin{equation}
  -\nabla^2 p = f,
  \quad \int_\Omega p \diff x = 0, \quad \pp{p}{n}=0 \mbox{ on }
  \partial \Omega,
\end{equation}
using compatible finite elements is very well studied. Defining
\begin{equation}
  \bar{V}_h^2(\Omega) = \left\{p\in V_h^2: \int_\Omega p \diff x = 0\right\},
\end{equation}
we seek
$u^h\in V_h^1(\Omega)$ and $p^h \in \bar{V}_h^2(\Omega)$ such that
\begin{align}
  \label{eq:poisson u}
  \int_\Omega w^h \cdot u^h \diff x + \int_\Omega (\nabla\cdot w^h) p^h \diff x & = 0,
  \quad \forall w^h \in V_h^1(\Omega), \\   \label{eq:poisson p}
  \int_\Omega \phi^h\nabla\cdot u^h \diff x & = \int_\Omega \phi^h f \diff x, \quad
  \forall \phi^h \in \bar{V}_h^2(\Omega).
\end{align}
The compatible finite element structure between $V_h^1(\Omega)$ and $V_h^2(\Omega)$ is
behind the proof of the following results, all of which can
be found in \cite{boffi2013mixed}. The first result is about the
stability of the $\tilde{\nabla}$ operator defined in Definition~\ref{def:nablat}.
\begin{theorem}[inf-sup condition]
  Define the bilinear form
  \begin{equation}
    b(w,p) = \int_\Omega (\nabla\cdot w) p \diff x.
  \end{equation}
  There exists $c>0$, independent of mesh resolution, such that
  \begin{equation}
    \inf_{p\in \bar{V}^2_h(\Omega)}\sup_{w\in V_h^1(\Omega)}\frac{b(w,p)}{\|w\|_{H(\ddiv)}
      \|p\|_{L^2}} \geq c,
  \end{equation}
  where
  \begin{equation}
    \|w\|^2_{H(\ddiv)} = \|w\|^2_{L^2}+\|\nabla\cdot w\|^2_{L^2}.
    \end{equation}
\end{theorem}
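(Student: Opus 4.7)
The plan is to apply the classical Fortin criterion, exploiting the bounded commuting projections guaranteed by the compatibility of $V_h^1(\Omega)$ and $V_h^2(\Omega)$. Given $p \in \bar{V}_h^2(\Omega)$, the aim is to construct $w \in V_h^1(\Omega)$ satisfying $\nabla\cdot w = p$ together with a uniform bound $\|w\|_{H(\mathrm{div})} \leq C\|p\|_{L^2}$. If such a $w$ can be produced, then
\begin{equation*}
\sup_{w' \in V_h^1(\Omega)} \frac{b(w',p)}{\|w'\|_{H(\mathrm{div})}\|p\|_{L^2}}
\;\geq\; \frac{b(w,p)}{\|w\|_{H(\mathrm{div})}\|p\|_{L^2}}
\;=\; \frac{\|p\|_{L^2}^2}{\|w\|_{H(\mathrm{div})}\|p\|_{L^2}}
\;\geq\; \frac{1}{C},
\end{equation*}
which yields the claim with $c = 1/C$.

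First I would establish the continuous counterpart: for any $p \in L^2(\Omega)$ with zero mean, there exists $v \in H(\mathrm{div};\Omega)$ with vanishing normal trace on $\partial\Omega$ such that $\nabla\cdot v = p$ and $\|v\|_{H(\mathrm{div})} \leq C_0 \|p\|_{L^2}$, with $C_0$ depending only on $\Omega$. The standard construction sets $v = \nabla \varphi$, where $\varphi$ solves the Neumann problem $-\Delta \varphi = -p$ with $\partial_n\varphi = 0$ and $\int_\Omega \varphi\,\diff x = 0$; the compatibility condition $\int_\Omega p\,\diff x = 0$ ensures solvability, and standard elliptic regularity on a suitably smooth $\Omega$ delivers the required bound.

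Next I would apply the bounded commuting projection $\pi^1 : H(\mathrm{div};\Omega)\to V_h^1(\Omega)$ supplied by the compatible finite element structure, setting $w := \pi^1 v$. The commutativity of the diagram gives
\begin{equation*}
\nabla\cdot w \;=\; \nabla\cdot \pi^1 v \;=\; \pi^2 (\nabla\cdot v) \;=\; \pi^2 p \;=\; p,
\end{equation*}
where the final identity uses that $p \in V_h^2(\Omega)$ and that $\pi^2$ acts as the identity on this subspace. Boundedness of $\pi^1$ (uniformly in $h$) together with the continuous estimate yields
\begin{equation*}
\|w\|_{H(\mathrm{div})} \;\leq\; \|\pi^1\|\,\|v\|_{H(\mathrm{div})} \;\leq\; C\|p\|_{L^2}.
\end{equation*}
Combining these two properties of $w$ with the computation above completes the argument.

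The main obstacle is conceptual rather than computational: it is the existence of the uniformly bounded, boundary-condition-respecting commuting projection $\pi^1$. This is precisely the nontrivial analytical input built into the notion of compatible finite element spaces, relying on the subspaces $\mathring{V}_h^i(\Omega)$ discussed earlier; once this is taken as given, the remaining steps reduce to the Neumann lift and the algebraic identity $\nabla\cdot w = p$. A minor subtlety is verifying that the zero-mean constraint is properly handled, but this follows automatically since $\int_\Omega p\,\diff x = 0$ for $p \in \bar V_h^2(\Omega)$, making the continuous Neumann problem solvable.
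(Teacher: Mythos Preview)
Your argument is correct: this is the classical Fortin construction using the bounded commuting projection $\pi^1$, and each step (continuous Neumann lift, commutativity $\nabla\cdot\pi^1 v = \pi^2\nabla\cdot v = p$, uniform $H(\mathrm{div})$-boundedness of $\pi^1$) is sound.

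The paper does not actually give a proof but simply invokes Proposition~5.4.2 of Boffi--Brezzi--Fortin. Your write-up supplies exactly the standard argument that underlies that proposition in the compatible finite element setting, so there is no substantive difference in approach --- you have merely unpacked the citation. One minor remark: the bound $\|v\|_{H(\mathrm{div})}\leq C_0\|p\|_{L^2}$ for the continuous lift does not require elliptic regularity, only the basic energy estimate for the Neumann problem together with $\|\nabla\cdot v\|_{L^2}=\|p\|_{L^2}$; invoking regularity here is harmless but unnecessary.
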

\begin{proof}
The result is a  particular case of Proposition 5.4.2 in \cite{boffi2013mixed}.
\end{proof}
This result is then key to the proof of the following theorem.
\begin{theorem}
  Equations (\ref{eq:poisson u}-\ref{eq:poisson p}) have a unique
  solution ($u^h$,$p^h$), satisfying
  \begin{align}
    \|u^h\|_{H(\ddiv)} & \leq a_1 \|f\|_{L^2}, \\
    \|p^h\|_{L^2} & \leq a_2 \|f\|_{L^2}, \\
    \|u-u^h\|_{H(\ddiv)} + \|p-p^h\|_{L^2} & \leq a_3 \|f-f^h\|_{L^2},
  \end{align}
  for positive constants $a_1$, $a_2$, $a_3$ independent of mesh resolution,
  where $p$ is the exact solution to the Poisson equation, $u=-\nabla p$,
  and $f^h$ is the $L^2$ projection of $f$ into $V_h^2(\Omega)$.
\end{theorem}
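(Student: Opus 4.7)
The plan is to recognize this as a standard mixed (saddle-point) problem and invoke the Brezzi theory, using the compatible finite element structure to verify its hypotheses. Writing $a(u,w) = \int_\Omega w\cdot u\,\ed x$ and $b(w,p) = \int_\Omega (\nabla\cdot w)\,p\,\ed x$, the system \eqref{eq:poisson u}--\eqref{eq:poisson p} is exactly the saddle-point problem $a(u^h,w^h)+b(w^h,p^h)=0$, $b(u^h,\phi^h)=(f,\phi^h)_{L^2}$. Continuity of both bilinear forms on $V_h^1(\Omega)\times \bar V_h^2(\Omega)$ is immediate from Cauchy--Schwarz, and the inf-sup condition for $b$ is precisely the preceding theorem. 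So the only remaining hypothesis is coercivity of $a$ on the discrete kernel $K_h=\{w\in V_h^1(\Omega) : b(w,\phi)=0\ \forall\phi\in\bar V_h^2(\Omega)\}$.

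The coercivity is where the compatible structure enters. For $w\in V_h^1(\Omega)$, compatibility gives $\nabla\cdot w\in V_h^2(\Omega)$, and the defining condition of $K_h$ says that $\nabla\cdot w$ is $L^2$-orthogonal to every mean-zero element of $V_h^2(\Omega)$. Since constants lie in $V_h^2(\Omega)$, this forces $\nabla\cdot w$ to be constant; the coastal boundary condition $w\cdot n=0$ (absorbed into $\mathring V_h^1(\Omega)$) together with the divergence theorem then pins this constant at zero. Hence $\nabla\cdot w=0$ on $K_h$, giving $a(w,w)=\|w\|_{L^2}^2=\|w\|_{H(\ddiv)}^2$, i.e.\ coercivity with constant one. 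Brezzi's theorem then delivers existence, uniqueness and the first two stability estimates with explicit $a_1,a_2$ depending only on the inf-sup constant $c$ and continuity constants.

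For the error bound I would use Galerkin orthogonality together with the commuting projections $\pi^1,\pi^2$. Subtracting the discrete equations from the continuous ones gives $a(u-u^h,w^h)+b(w^h,p-p^h)=0$ and $b(u-u^h,\phi^h)=(f-f^h,\phi^h)_{L^2}$ for all $(w^h,\phi^h)$. The key observation, which I expect to be the technical heart of the argument, is that
\begin{equation}
  \nabla\cdot(\pi^1 u) \;=\; \pi^2(\nabla\cdot u) \;=\; \pi^2 f \;=\; f^h,
\end{equation}
by commutativity and the fact that $\pi^2$ may be taken to be the $L^2$ projection onto $V_h^2(\Omega)$. Consequently $\pi^1 u-u^h$ is divergence-free, and $\nabla\cdot(u-u^h)=f-f^h$ directly. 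Testing the first orthogonality relation with $w^h=\pi^1 u-u^h$ annihilates the $b$-term and yields $\|\pi^1 u-u^h\|_{L^2}\le \|u-\pi^1 u\|_{L^2}$; combined with the triangle inequality this controls $\|u-u^h\|_{H(\ddiv)}$ by $\|u-\pi^1 u\|_{L^2}+\|f-f^h\|_{L^2}$. The pressure error is then recovered by using the inf-sup condition on $b(\cdot,p^h-\pi^2 p)$ and the first orthogonality.

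The main obstacle is the final bookkeeping that reduces the combined error estimate to an expression in $\|f-f^h\|_{L^2}$ alone, as stated. This requires using the boundedness of $\pi^1$ in a form that relates $\|u-\pi^1 u\|_{L^2}$ back to $\|\nabla\cdot(u-\pi^1 u)\|_{L^2}=\|f-f^h\|_{L^2}$ through a discrete Poincaré-type estimate on the $L^2$-orthogonal complement of divergence-free fields, which is itself a consequence of the Helmholtz decomposition in Theorem~\ref{th:hdec} and the inf-sup condition. Once this reduction is in place, collecting the estimates for velocity and pressure and absorbing all mesh-independent constants into a single $a_3$ completes the proof. All steps are standard mixed-method arguments; for full details I would cite the corresponding propositions in \cite{boffi2013mixed}.
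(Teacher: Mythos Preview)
The paper's own proof is nothing more than a citation to \cite{boffi2013mixed} (Theorems 4.3.2, 5.2.1 and 5.2.5), so your explicit verification of the Brezzi hypotheses and your concluding sentence are entirely in line with what the paper does. For the first two estimates your argument is correct and standard: continuity is Cauchy--Schwarz, the inf--sup condition is the preceding theorem, and your coercivity-on-the-kernel computation using $\nabla\cdot V_h^1\subset V_h^2$ together with the divergence theorem is the right one.

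The difficulty you flag with the third estimate is, however, genuine and cannot be repaired by the Poincar\'e-type argument you propose. The quantity $u-\pi^1 u$ does not lie in the $L^2$-orthogonal complement of divergence-free fields: $u=-\nabla p$ is a gradient, but the commuting interpolant $\pi^1$ is not an $L^2$-projection and need not send gradients to discrete gradients, so no inequality of the form $\|u-\pi^1 u\|_{L^2}\le C\|\nabla\cdot(u-\pi^1 u)\|_{L^2}$ is available. In fact the third inequality as literally written cannot hold with a mesh-independent constant: take $f\in V_h^2(\Omega)$ (for instance a checkerboard on a $2\times2$ square mesh with $RT_0$--$DG_0$), so that $f^h=f$ and the right-hand side vanishes, while the exact $u=-\nabla p$ has infinitely many Fourier modes and hence is not in $V_h^1(\Omega)$, giving $\|u-u^h\|_{L^2}>0$. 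What your argument does establish is $\nabla\cdot(u-u^h)=f-f^h$ and $\|u-u^h\|_{L^2}\le 2\|u-\pi^1 u\|_{L^2}$; the correct abstract estimate in \cite{boffi2013mixed} carries an additional best-approximation term for $u$ (and $p$) on the right, which the paper's paraphrase has omitted. Your instinct that something was missing was right---it is the statement, not your proof, that needs amending.
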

\begin{proof}
This is a particular case of the abstract results in Chapters 4 and 5 of \cite{boffi2013mixed}. See in particular Theorems 4.3.2, 5.2.1 and 5.2.5.
\end{proof}
Finally, we have the following convergence result for eigenvalues.
\begin{theorem}
  Let $\lambda_0<\lambda_1<\ldots<\lambda_i<\ldots$ be the eigenvalues
  of the Laplacian operator $-\nabla^2$, with corresponding
  eigenspaces $\{E_i\}_{i=0}^{\infty}$. \\ Let
  $\lambda^h_0<\lambda_1^h<\ldots<\lambda_m^h$, be the complete set of
  eigenvalues of the discrete Laplacian operator
  $-\nabla\cdot\tilde{\nabla}$, with corresponding eigenspaces
  $\{E_i^h\}_{i=0}^{m}$.

  Then $\forall \epsilon > 0$, $\forall k
  \in \mathbb{N}$, $\exists h_0 > 0$ such that $\forall h \leq h_0$,
  \begin{align}
    \max_{i=0,\ldots, k}|\lambda_i - \lambda^h_i| &\leq \epsilon, \\
    \hat{\delta}\left(
    \oplus_{i=0}^kE_i,     \oplus_{i=0}^kE_i^h 
    \right) & \leq \epsilon,
  \end{align}
  where
  \begin{align}
    \hat{\delta}(E,F) & = \max\left(\delta(E,F),\delta(F,E)\right), \\
    \delta(E,F) & = \sup_{p\in E,\|p\|_{L^2}=1}\inf_{\phi\in F}\|p-\phi\|_{L^2}.
  \end{align}
\end{theorem}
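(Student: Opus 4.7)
My plan is to reduce the discrete eigenvalue problem to a source problem and invoke the classical Babu\v{s}ka--Osborn spectral approximation theory for compact self-adjoint operators.

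First, I would introduce the solution operators $T,T^h:L^2(\Omega)\to L^2(\Omega)$, defined by $Tf=p$ via the continuous Poisson problem on mean-zero data and $T^hf=p^h$ via the mixed discretisation \eqref{eq:poisson u}--\eqref{eq:poisson p} (both extended by zero on the orthogonal complement of the mean-zero subspace). Both operators are self-adjoint, positive and compact (the latter trivially, being finite-rank), and their nonzero eigenvalues are precisely $\{1/\lambda_i\}$ and $\{1/\lambda_i^h\}$, with the same eigenspaces $E_i,E_i^h$ as in the statement. Convergence of the eigenpairs of $-\nabla^2$ and $-\nabla\cdot\tilde\nabla$ is therefore equivalent to spectral convergence of $T^h$ to $T$.

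The crucial analytic step is to establish norm convergence $\|T-T^h\|_{\mathcal{L}(L^2)}\to 0$. For $f\in L^2$ with zero mean on a suitably smooth domain, elliptic regularity with Neumann data gives $p=Tf\in H^{1+s}$ with $\|p\|_{H^{1+s}}\le C\|f\|_{L^2}$ for some $s>0$. Combining this with the quasi-optimal a priori estimate from the previous theorem and the approximation properties of the bounded commuting projections $\pi^0,\pi^1,\pi^2$ supplied by the compatible finite element structure yields $\|p-p^h\|_{L^2}\le C h^s\|p\|_{H^{1+s}}\le C h^s\|f\|_{L^2}$. Taking the supremum over $\|f\|_{L^2}=1$ gives $\|T-T^h\|_{\mathcal{L}(L^2)}\le Ch^s\to 0$. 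This is where the compatible cochain structure is genuinely needed: the Fortin-type projection $\pi^1$ commuting with $\nabla\cdot$ supplies both the uniform discrete inf--sup bound and the interpolation estimate that underpins the $L^2$ error bound for $p$.

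Finally, norm convergence of compact self-adjoint operators implies, by the Babu\v{s}ka--Osborn (equivalently Kato) spectral perturbation theorem, that each isolated eigenvalue $1/\lambda_i$ of $T$ is approximated by a cluster of eigenvalues of $T^h$ with the correct total multiplicity, and that the corresponding total eigenprojections converge in operator norm. Continuity of reciprocation away from zero translates the eigenvalue statement into $\max_{i\le k}|\lambda_i-\lambda_i^h|\le\epsilon$, while norm convergence of the total eigenprojection onto $\bigoplus_{i=0}^k E_i$ is exactly equivalent to convergence in the gap metric $\hat\delta$. The main obstacle is the norm convergence $T^h\to T$: the a priori estimate of Theorem~2.2 alone is only quasi-optimal in the data and does not give a rate in $h$; one must combine it with elliptic regularity and the existence of commuting bounded cochain projections in order to extract such a rate, which is precisely what the compatible finite element framework provides.
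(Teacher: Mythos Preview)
Your proposal is correct and follows essentially the same route as the paper, which simply cites the abstract spectral approximation theory in Section~6.5 of \cite{boffi2013mixed} (Theorem~6.5.1); that theory is precisely the Babu\v{s}ka--Osborn framework you outline, with norm convergence of the solution operator $T^h\to T$ in $\mathcal{L}(L^2)$ as the key hypothesis, verified via the inf--sup condition, a priori estimates, and elliptic regularity. You have spelled out the argument in more detail than the paper does, but the underlying mechanism is identical.
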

\begin{proof}
The result follows from applying the abstract results of Section 6.5 in \cite{boffi2013mixed}, and in particular as a consequence of Theorem 6.5.1.
\end{proof}

\section{Compatible finite element methods for the linear shallow water equations}
\label{sec:linear fem}
In this section we summarise some properties of compatible finite
element methods applied to the linear shallow water equations. The
equations take the form
\begin{align}
  \label{eq:cont sw u}
  u_t + fu^\perp + g\nabla h & = 0, \\
  \label{eq:cont sw h}
  h_t + H\nabla\cdot u & = 0,
\end{align}
where the $\perp$ operator maps $(u_1,u_2)$ to $(-u_2,u_1)$, $u_t:=
\partial u/\partial t$, $h$ is the layer height, and $f$ is the
(possibly spatially-dependent) Coriolis parameter, $g$ is the
acceleration due to gravity and $H$ is a constant representing the reference depth. 
This system of equations is a toy model of barotropic waves in the atmosphere, and it is used to explore important properties of numerical discretisations.

The compatible finite element spatial discretisation
for the shallow water equations takes $u\in V_h^1(\Omega)$, $h\in V_h^2(\Omega)$,
and solves
\begin{align}  
  \label{eq:sw u}
  \int_\Omega w \cdot u_t \diff x + \int_\Omega f w\cdot u^\perp \diff x
  - g\int_\Omega (\nabla\cdot w) h \diff x & = 0, \quad \forall w \in V_h^1(\Omega),   \\
  \label{eq:sw h}
  \int_\Omega \phi (h_t + H\nabla\cdot u)\diff x & = 0, \quad
  \forall \phi \in V_h^2(\Omega). 
\end{align}
\cite{StTh2012} provided a number of desirable properties for
numerical discretisations when applied to the linear shallow water
equations, which we consider in the following subsections. We note
that all of these results apply on arbitrarily unstructured grids.

\subsection{Energy conservation}
Combining Equation~\eqref{eq:sw u} and \eqref{eq:sw h}, with $w=u$ in the first and $\phi = h$ in the second, shows that the energy
\begin{equation}
  E = \frac{1}{2}\int_\Omega H|u|^2 + gh^2 \diff x,
\end{equation}
is conserved by these solutions. Note that in the fully discrete system, \emph{i.e.}\ when time is also discretised, the level of energy conservation will depend upon the chosen scheme and timestep.

\subsection{Stability of pressure gradient operator}
It is crucial that the discretisation of the pressure gradient term
$\nabla h$ is stable; this means that there are no functions $h$ for
which $h$ is large amplitude but the discrete approximation to $\nabla
h$ is small. In other words, there are no spurious eigenvalues for the
discrete Laplacian obtained by combining $\nabla\cdot$ with
$\tilde{\nabla}$.  For mixed finite element discretisations, this
requirement is formalised by the inf-sup condition, which we have
already stated in the previous section. In fact, further than this,
the eigenvalue convergence result for $\nabla\cdot\tilde{\nabla}$
shows that the wave equation with $f=0$ has convergent eigenvalues.

\subsection{Geostrophic balance}
A crucial property for large scale modelling is that the linearised equations on the $f$-plane (\emph{i.e.}\ $\Omega$ is the periodic plane and $f$ is constant) have geostrophic steady states \cite{StTh2012}. This is proved in the following.
\begin{theorem}[Cotter and Shipton (2012)]
  For all divergence-free $u\in B_h$, there exists $h$ such that
  $(u,h)$ is a steady state solution of Equations (\ref{eq:sw
    u}-\ref{eq:sw h}).
\end{theorem}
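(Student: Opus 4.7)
The plan is to exhibit an explicit $h\in V_h^2(\Omega)$ for which $(u,h)$ solves the steady-state problem. Since $u\in B_h$, we have $u=\nabla^\perp\psi$ for some $\psi\in V_h^0(\Omega)$, and the pointwise identity $\nabla\cdot\nabla^\perp\psi=0$ makes Equation~\eqref{eq:sw h} reduce to $\int_\Omega\phi\,h_t\,\diff x=0$ for all $\phi\in V_h^2(\Omega)$; this is satisfied by $h_t=0$ whatever the choice of $h$. The remaining task is to pick $h$ so that the momentum Equation~\eqref{eq:sw u} also admits $u_t=0$.

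On the $f$-plane, $f$ is constant, and the pointwise identity $(\nabla^\perp\psi)^\perp=-\nabla\psi$ rewrites the Coriolis term as $\int_\Omega fw\cdot u^\perp\,\diff x=-f\int_\Omega w\cdot\nabla\psi\,\diff x$. Because the domain is periodic and $\psi\in H^1(\Omega)$, $w\in H(\ddiv;\Omega)$, integrating by parts carries no boundary contribution, giving
\begin{equation*}
-f\int_\Omega w\cdot\nabla\psi\,\diff x \;=\; f\int_\Omega(\nabla\cdot w)\,\psi\,\diff x.
\end{equation*}
Hence the steady momentum equation is equivalent to $\int_\Omega(\nabla\cdot w)(f\psi-gh)\,\diff x=0$ for all $w\in V_h^1(\Omega)$.

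By compatibility, $\nabla\cdot w\in V_h^2(\Omega)$ whenever $w\in V_h^1(\Omega)$, so a sufficient condition is that $f\psi-gh$ be orthogonal to $V_h^2(\Omega)$ in $L^2$. I would therefore define $h:=(f/g)\,P\psi$, where $P\colon L^2(\Omega)\to V_h^2(\Omega)$ is the $L^2$ projection; by construction $\int_\Omega\phi(f\psi-gh)\,\diff x=0$ for every $\phi\in V_h^2(\Omega)$, including the special choice $\phi=\nabla\cdot w$, and the claimed steady state follows.

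The argument is mostly bookkeeping, and the only step that needs any care is the integration by parts, which is justified by the Green identity for $H(\ddiv)$--$H^1$ pairs together with the absence of a boundary term on the periodic plane. Notably, no inf-sup result or Helmholtz decomposition is needed at this stage: because we are free to pick $h$ and only require a sufficient condition on the residual, the single choice $h=(f/g)\,P\psi$ handles every test function $w\in V_h^1(\Omega)$ uniformly.
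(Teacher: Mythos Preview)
Your proof is correct and follows essentially the same route as the paper: both define $h$ as the $L^2$ projection of $(f/g)\psi$ into $V_h^2(\Omega)$, use $(\nabla^\perp\psi)^\perp=-\nabla\psi$, and integrate by parts to cancel the Coriolis term against the pressure gradient. The only cosmetic difference is the justification of the integration by parts (you invoke the $H(\ddiv)$--$H^1$ Green identity on the periodic plane, while the paper observes that $\psi w\in H(\ddiv)$), and your closing remark that no inf--sup condition or Helmholtz decomposition is needed is a worthwhile clarification.
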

\begin{proof}
  Since $\nabla\cdot u=0$, Equation \eqref{eq:sw h} immediately
  implies that $h_t=0$. Since, $u\in B_h$ and so there exists $\psi\in
  V_h^0(\Omega)$ such that $u=\nabla^\perp\psi$.  Take $h\in
  V_h^2(\Omega)$ according to
  \begin{equation}
    \int_\Omega \phi g h \diff x = 
    \int_\Omega \phi f \psi\diff x 
    \quad \forall \phi \in V_h^2(\Omega).
  \end{equation}
  Then,
  \begin{align}
    \int_\Omega w \cdot u_t \diff x & = \int_\Omega f w\cdot \nabla \psi
    +(\nabla\cdot w) gh \diff x, \\
     & = \int_\Omega -(\nabla\cdot w) f\psi
    +(\nabla\cdot w) gh \diff x, \\
         & = \int_\Omega -(\nabla\cdot w) g h
    +(\nabla\cdot w) gh \diff x=0,
  \end{align}
  after integrating by parts, which is permitted since the product $ \psi w \in H(\ddiv)$.
\end{proof}
If this property is not satisfied, then a solution with balanced initial condition oscillates on the fast timescale. This is catastrophic when the nonlinear terms are introduced since the solution is polluted by rapid oscillations before the slow balanced state has had time to evolve; this renders a discretisation useless for large scale atmosphere or ocean modelling. 

\subsection{Absence of spurious inertial oscillations}
Inertial oscillations are solutions of the shallow water equations on
the $f$-plane with $h=0$. For the continuous
equations, Equation \eqref{eq:cont sw h} implies that $\nabla\cdot u =
0$. Then, Equation \eqref{eq:cont sw u} implies that
\begin{equation}
  u_t + f u^\perp = 0.
\end{equation}
Taking the divergence then implies that $\nabla^\perp \cdot u =
0$. Therefore, $u$ is a harmonic function. On the periodic plane,
$\mathfrak{h}$ is two-dimensional, consisting of all spatially
constant velocity fields, and the solution just rotates around
with frequency $f$.

Some discretisations, for example the $DG_k$-$CG_{k-1}$ discretisation considered in \cite{cotter2011numerical}, have subspaces of spurious inertial oscillations, \emph{i.e.}\ extra divergence-free oscillatory solutions with $h=0$, which are not spatially uniform. These are known to cause spurious gridscale oscillations in regions of high shear such as representations of the Gulf Stream in ocean models \cite{danilov}. The staggered C-grid finite
difference method is free from such modes. It does, however, have a steady divergence-free solution in the kernel of the Coriolis operator, which
is known as a ``Coriolis mode''. We shall see in this section that similar
modes can exist for compatible finite element methods.

Inertial oscillations for compatible finite element discretisations
were not properly considered by \cite{CoSh2012}, so we consider them
here.  We start by observing that for a compatible discretisation $\mathfrak{h}_h \equiv \mathfrak{h}$, \emph{i.e.}\ it is the space of constant vector fields $v_0$. To see this, first note that $\nabla\cdot v_0=0$.
Secondly,
\begin{equation}
  \int_\Omega \nabla^\perp\gamma \cdot v_0 \diff x =
  - \int_\Omega \gamma\underbrace{\nabla^\perp v_0}_{=0}\diff x = 0,
  \quad \forall \gamma\in V_h^0(\Omega),
\end{equation}
after integration by parts (since $\gamma$ and $v_0$ are both
continuous), and therefore $\tilde{\nabla}^\perp v_0=0$. Therefore,
$v_0\in\mathfrak{h}_h$. Since $\dim(\mathfrak{h}_h)= \dim(\mathfrak{h})=2$ by Theorem~\ref{th:hdec}, we have
therefore characterised all discrete harmonic functions in
$V_h^1(\Omega)$. 

\begin{theorem}
  The only time-varying solutions of the compatible finite element
  discretisation of the shallow water equations on the $f$-plane with
  divergence-free $u$ and $h=0$ are the solutions with spatially
  constant $u$ satisfying $u_t + fu^\perp = 0$. Any time-independent
  solutions are in the kernel of the discrete Coriolis operator,
  \emph{i.e.}
  \begin{equation}
    \int_\Omega w\cdot u^\perp \diff x = 0, \quad \forall w \in V_h^1(\Omega).
  \end{equation}
\end{theorem}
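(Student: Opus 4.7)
My plan is to apply the discrete Helmholtz decomposition of Theorem~\ref{th:hdec} to split $u$ into its vortical and harmonic parts, and then test the momentum equation \eqref{eq:sw u} against carefully chosen members of each compatible subspace. Since $h\equiv 0$ makes the continuity equation \eqref{eq:sw h} read $\int_\Omega\phi\,\nabla\cdot u\diff x = 0$ for every $\phi\in V_h^2(\Omega)$, and $\nabla\cdot u\in V_h^2(\Omega)$, it follows that $\nabla\cdot u = 0$. Hence $u\in B_h\oplus\mathfrak{h}_h$, so I can write $u = \nabla^\perp\psi + v_0$ with $\psi\in V_h^0(\Omega)$ (normalised to have zero spatial mean at each time) and $v_0(t)\in\mathfrak{h}_h$ a spatially constant vector field. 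The central goal is to show that $\psi_t\equiv 0$; once this is established, any time dependence of $u$ is carried entirely by $v_0(t)$, which is spatially constant as the theorem claims.

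To show $\psi_t = 0$, I would test the momentum equation with $w = \nabla^\perp\gamma$ for an arbitrary $\gamma\in V_h^0(\Omega)$. The pressure term drops because $\nabla\cdot w = 0$. Because $\int_\Omega\nabla^\perp\gamma\diff x = 0$ on the periodic plane, the constant contributions involving $(v_0)_t$ and $v_0^\perp$ both vanish, leaving
\[
\int_\Omega\nabla\gamma\cdot\nabla\psi_t\diff x \;=\; -f\int_\Omega\nabla^\perp\gamma\cdot\nabla\psi\diff x.
\]
The main obstacle is showing that the right-hand side vanishes. On each element $T$, the integrand equals the divergence of $\gamma\,\nabla^\perp\psi$, so by the divergence theorem the elementwise integral reduces to an edge contribution of $\gamma(\nabla^\perp\psi\cdot\nu)$. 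Here compatibility is essential: $\nabla^\perp\psi\in V_h^1(\Omega)\subset H(\ddiv;\Omega)$, so its normal trace is single-valued across interelement edges, while $\gamma\in V_h^0(\Omega)\subset H^1(\Omega)$ is continuous; thus the contributions from adjacent elements cancel in pairs. Taking $\gamma = \psi_t$ in the surviving identity $\int_\Omega\nabla\gamma\cdot\nabla\psi_t\diff x = 0$ yields $\nabla\psi_t = 0$, and the zero-mean normalisation of $\psi$ then forces $\psi_t = 0$.

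With $\psi$ now known to be time-independent, I would test the momentum equation one final time with a constant $w = c\in\mathfrak{h}_h$. Since $\int_\Omega\nabla\psi\diff x = 0$ on the periodic plane, this collapses to $\bigl((v_0)_t + fv_0^\perp\bigr)\cdot c\,|\Omega| = 0$ for every constant $c$, giving the inertial oscillation ODE $(v_0)_t + fv_0^\perp = 0$; since $v_0$ is the entire time-varying part of $u$, this is exactly the claimed $u_t + fu^\perp = 0$. The statement about time-independent solutions is immediate: if $u_t\equiv 0$ and $h\equiv 0$, the momentum equation reduces to $f\int_\Omega w\cdot u^\perp\diff x = 0$ for every $w\in V_h^1(\Omega)$, which (since $f\neq 0$) is the stated kernel condition for the discrete Coriolis operator.
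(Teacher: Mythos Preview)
Your argument is correct and reaches the same conclusion as the paper, but by a more direct route. Both approaches begin by deducing $\nabla\cdot u=0$ and testing \eqref{eq:sw u} against $w=\nabla^\perp\gamma$; the paper phrases the key cancellation as $\int_\Omega\nabla\gamma\cdot u\,\diff x=-\int_\Omega\gamma\,\nabla\cdot u\,\diff x=0$ (integrating by parts against the full $u$), whereas you decompose $u$ first and cancel the Coriolis contribution via the identity $\int_\Omega\nabla^\perp\gamma\cdot\nabla\psi\,\diff x=0$, which is the same integration by parts in disguise. Either way one obtains $u_t\in\mathfrak{h}_h$, equivalently $\psi_t=0$.

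The substantive difference is in the treatment of the stationary part. The paper writes $u=k+u_0$, invokes the geostrophic balance result to produce a height $h_0$ balancing $u_0$, and then uses uniqueness for the mixed Poisson problem to force $h_0=0$, concluding that $u_0$ lies in the Coriolis kernel. You bypass this machinery entirely: once $\psi_t=0$ is known, testing against constants $w=c\in\mathfrak{h}_h$ isolates the inertial ODE $(v_0)_t+f v_0^\perp=0$ directly. This is cleaner, though note that your closing sentence overstates slightly: you have shown $(v_0)_t+f v_0^\perp=0$, not $u_t+fu^\perp=0$ for the full $u$, since $u^\perp$ still carries $-\nabla\psi$. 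To recover the paper's explicit statement that the stationary piece is in the Coriolis kernel, simply observe that by linearity $\nabla^\perp\psi=u-v_0$ is itself a time-independent solution with $h=0$, so your final paragraph applies to it.
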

\begin{proof}
To see that spurious inertial oscillations do not exist for
compatible finite element discretisations, we first note that $h=0$
implies that $\nabla\cdot u = 0$. Then Equation \eqref{eq:sw u}
implies that
\begin{equation}\label{eq:simprsw}
  \int_\Omega w \cdot u_t \diff x + f\int_\Omega w\cdot u^\perp \diff x  = 0,
  \quad \forall w \in V_h^1(\Omega).
\end{equation}
Taking $w=\nabla^\perp\gamma$ for $\gamma\in V_h^0(\Omega)$, we obtain
\begin{align}
  \int_\Omega \nabla^\perp\gamma \cdot u_t \diff x
  &= -f\int_\Omega \nabla\gamma\cdot u \diff x, \\
  & = f\int_\Omega \gamma \nabla\cdot{u}\diff x = 0,
  \quad \forall \gamma \in V_h^0(\Omega), 
\end{align}
where we may integrate by parts for the same reason as in the
geostrophic balance calculation. This means that $u_t$ is orthogonal
to everything in $B_h$, \emph{i.e.}\ $u_t\in \mathfrak{h}_h$.
Hence, we
may write $u=k+u_0$, where $k\in \mathfrak{h}_h$, and $u_0$ is
divergence-free and independent of time. Therefore, by the geostrophic
balance condition, there exists $h_0\in V^2_h(\Omega)$ such that
\begin{equation}
f\int_\Omega w \cdot u_0^\perp \diff x -
  g\int_\Omega (\nabla\cdot w) h_0 \diff x = 0,
  \quad \forall w \in V_h^1(\Omega),
\end{equation}
and therefore substitution of $u=k + u_0$ into Equation~\eqref{eq:simprsw} leads to
\begin{equation}
  \int_\Omega w\cdot k_t \diff x
  = -f\int_\Omega w \cdot k^\perp \diff x +
  g\int_\Omega (\nabla\cdot w) h_0 \diff x,
  \quad \forall w \in V_h^1(\Omega).
\end{equation}
Since $\mathfrak{h}_h$ is the space of constant vector
fields, if $k\in \mathfrak{h}_h$, then so is $k^\perp$. Therefore both
$k_t$ and $k^\perp$ are orthogonal to $B_h^*$, and therefore
\begin{equation}
  \int_\Omega (\nabla\cdot w) h_0 \diff x = 0, \quad \forall w \in B_h^*.
\end{equation}
This means that if we define $v\in V^1_h(\Omega)$ \emph{via} 
\begin{equation}
 \int_\Omega w\cdot v \diff x = -\int_\Omega (\nabla\cdot w) h_0 \diff x,
\quad \forall w \in V^1_h(\Omega),
\end{equation}
then $\nabla \cdot v=0$. This means that $v \in V_h^1(\Omega)$ and $h_0 \in V^2_h(\Omega)$ satisfy the following coupled system of equations,
\begin{align}
  \int_\Omega w\cdot v \diff x + \int_\Omega (\nabla\cdot w) gh_0 \diff x & = 0,
  \quad \forall w \in V_h^1(\Omega), \\
  -\int_\Omega \phi\nabla\cdot v \diff x & = 0, \quad \forall \phi\in V_h^2(\Omega).
\end{align}
This is just the mixed finite element discretisation of the Poisson equation
with zero on the right-hand side, and therefore $h_0=0$ by
uniqueness. Therefore, we have 
\begin{equation}
\int_\Omega w \cdot u^\perp_0 \diff x = 0, \quad \forall w \in V_h^1(\Omega).
\end{equation}
This means that $u_0$ must be in the kernel of the
Coriolis operator.

Finally, we have
\begin{equation}
  \int_\Omega w\cdot k_t \diff x + \int_\Omega fw\cdot k^\perp \diff x = 0,
  \quad \forall w\in \mathfrak{h}_h\subset V_h^1(\Omega),
\end{equation}
which implies the pointwise equation $k_t + f k^\perp=0$, since the $L^2$-projection onto $\mathfrak{h}_h$ is trivial. 
\end{proof}

\subsection{Inertia-gravity waves}
Continuing the analysis of solutions in the periodic $f$-plane,
\cite{CoSh2012} considered the time-dependent divergent solutions of Equations
(\ref{eq:sw u}-\ref{eq:sw h}), \emph{i.e.}\ the inertia-gravity wave
solutions. One way to calculate the equation for these modes is to
make a discrete Helmholtz decomposition of both the solution
$u$ and the test function $w$,
\begin{align}
  u &= \nabla^\perp\psi + \tilde{\nabla}\phi + k, \quad \psi\in\bar{V}_h^0(\Omega), \,
  \phi\in \bar{V}_h^2(\Omega), \, k \in \mathfrak{h}_h, \\
  w &= \nabla^\perp\gamma + \tilde{\nabla}\alpha + l,
  \quad \gamma\in \bar{V}_h^0(\Omega), \,
  \alpha\in \bar{V}_h^2(\Omega), \, l \in \mathfrak{h}_h,
\end{align}
where
\begin{equation}
  \bar{V}_h^0(\Omega) = \left\{\psi\in V_h^0: \int_\Omega \psi \diff x = 0\right\},
\end{equation}
and substitute into the equations, using orthogonality to obtain
\begin{align}
  \int_\Omega \nabla\gamma\cdot\nabla \psi_t - f\nabla\gamma\cdot\tilde{\nabla}\phi\diff x & = 0, \quad \forall \gamma \in \bar{V}_h^0(\Omega), \label{eq:ortpro}\\
  \int_\Omega \tilde{\nabla}\alpha\cdot\tilde{\nabla}{\phi}_t +
  f\tilde{\nabla}\alpha\cdot(\tilde{\nabla}\phi)^\perp - \tilde{\nabla}\alpha
  \cdot\nabla\psi - g(\nabla\cdot\tilde{\alpha} )h \diff x & = 0,
  \quad \forall \alpha\in \bar{V}_h^2(\Omega),
\end{align}
where we have made use of the fact that the discrete harmonic functions
consists of constant vector fields.
Taking the time derivative of the second equation and substituting
$h_t=-H\nabla\cdot u$, which holds pointwise since $\nabla\cdot u\in
V_h^2(\Omega)$, we obtain
\begin{equation}
  \int_\Omega \tilde{\nabla}\alpha\cdot\tilde{\nabla}{\phi}_{tt} +
  f\tilde{\nabla}\alpha\cdot(\tilde{\nabla}\phi)^\perp_t -
  \tilde{\nabla}\alpha \cdot\nabla\psi_t +
  gH\nabla\cdot\tilde{\nabla}{\alpha}\nabla\cdot\tilde{\nabla}{\phi} \diff x = 0,
  \quad \forall \alpha\in \bar{V}_h^2(\Omega).
\end{equation}
Finally, we define a projection $P:\bar{V}_h^2(\Omega)\to \bar{V}_h^0(\Omega)$ by
\begin{equation}
  \int_\Omega \nabla\gamma\cdot\nabla P\phi = \int_\Omega\nabla\gamma\cdot\tilde{\nabla}\phi\diff x, \quad \forall \gamma \in \bar{V}^0_h(\Omega), \\
\end{equation}
which is solvable since the left hand side is the standard continuous finite
element discretisation of the Laplace operator. Then, the equation becomes
\begin{equation}
  \int_\Omega \tilde{\nabla}\alpha\cdot \tilde{\nabla}\phi_{tt}
  + f^2\nabla P\alpha \cdot \nabla P\phi +
    gH\nabla\cdot\tilde{\nabla}{\alpha}\nabla\cdot\tilde{\nabla}{\phi} \diff x = - \int_{\Omega} f\tilde{\nabla}\alpha\cdot(\tilde{\nabla}\phi)^\perp_t \diff x,
  \quad \forall \alpha\in \bar{V}_h^2(\Omega).
\end{equation}
Upon inspection, when $f=0$ this is just the mixed compatible finite
element discretisation of the wave equation, which we know to have
convergent eigenvalues from the previous section.

The continuous form of this equation is
\begin{equation}
  \nabla^2(\pp{^2}{tt} + f^2)\phi + gH\nabla^2\nabla^2\phi = 0.
\end{equation}
For $f\neq 0$, we might hope that the right-hand side of the
discretised equation is small for resolved modes.  However, the
projection is more problematic, since $V_h^0(\Omega)$ and $V_h^2(\Omega)$ might have
different dimensions. If $\dim(V_h^0(\Omega))<\dim(V_h^2(\Omega))$, then some modes are
projected out and the $f^2$ contribution (which originates from the
Coriolis term) is invisible to those modes. Hence, \cite{CoSh2012}
concluded that $\dim(V_h^0(\Omega))\geq \dim(V_h^2(\Omega))$ is a necessary (but not
sufficient) condition for absence of spurious inertia-gravity
modes. This necessary condition is not satisfied for the $RT_k$ family
on triangles. For example, when $k=0$, we have $\dim(V_h^2(\Omega))=2\dim(V_h^0(\Omega))$.
This leads to the spurious inertia-gravity mode behaviour that was
observed by \cite{Da2010} for $RT_0-DG_0$ and the related C-grid
staggered finite difference method (obtained by lumping the mass in
the $RT_0-DG_0$ discretisation). On the other hand, the $BDM_k$ family
does satisfy this necessary condition. \cite{CoSh2012} noted that the
$BDFM_1$ spaces exactly satisfy $\dim(V_h^2(\Omega))=\dim(V_h^0(\Omega))$ which might be
additionally desirable, particularly since they also analysed the
Rossby wave equation (not reproduced here) and concluded that
$\dim(V_h^2(\Omega)) \leq \dim(V_h^0(\Omega))$ is a necessary condition for absence of
spurious Rossby waves. However, it is not clear that spurious Rossby
waves are damaging for numerical simulations, since high spatial
frequency Rossby waves do not propagate anyway.

To explore sufficient conditions for absence of spurious
inertia-gravity modes, it might only be possible to make progress by
assuming symmetric grids and performing von Neumann analysis. On
triangular grids this is very challenging, but the analysis has been
successfully carried out for $RT_0$ and $BDM_1$ spaces in
\cite{rostand2008raviart}; it is still unclear how to dealias the high
mode branch identified for $BDM_1$, but we believe that they are physical
modes. On rectangular elements, it is much easier since one can
exploit the tensor product structure. \cite{staniforth2013analysis}
showed that (under the assumption of $y$-independent solutions) the
$RT_0-DG_0$ space is free from spurious modes and has a similar, but
more accurate dispersion relation to the finite difference C-grid
staggering. They investigated $RT_1-DG_1$, and found two mode branches,
a high branch and a low branch. Although both of these branches
correspond to physical modes, there is a small jump between the
branches, where the group velocity goes to zero. This might be
problematic in a nonlinear model if numerical noise, data assimilation
or physics parameterisations were to excite this mode, since it would
not propagate away. The authors were able to remove the jump, and the
zero group velocity region, by applying a perturbation to the mass
matrix (which does not affect consistency). \cite{melvin2014two}
considered two-dimensional solutions and showed the same results; no
spurious inertia gravity modes but jumps do appear in the dispersion
relation that can be fixed by consistent modification of the mass
matrix.

\section{Compatible finite element methods for the nonlinear shallow water
  equations}
\label{eq:nonlinear sw}
In this section we survey the application of compatible finite element
methods to the nonlinear shallow water equations.

\subsection{Energy-enstrophy conservation}
The compatible
finite element structure facilitates the design of spatial discretisations 
that conserve mass, energy and enstrophy. An energy-enstrophy conserving
scheme was presented in \cite{McRae14}. The starting point is the
vector invariant form of the shallow water equations,
\begin{align}
  u_t + qF^\perp + \nabla \left(gD + \frac{1}{2}|u|^2\right) & = 0, \\
  D_t + \nabla\cdot F & = 0, 
\end{align}
where $D$ is the layer depth, $F$ is the mass flux and $q$
is the potential vorticity, defined by
\begin{equation}
  F = uD, \quad q = \frac{\nabla^\perp\cdot u + f}{D},
\end{equation}
respectively. It is well known that these equations have conserved energy $E$
and mass $M$, where
\begin{equation}
  E = \frac{1}{2}\int_\Omega D|u|^2 + gD^2\diff x, \quad
  M = \int_\Omega D \diff x,
\end{equation}
plus an infinity hierarchy of conserved Casimirs, given by
\begin{equation}
  C_i = \int_\Omega Dq^i \diff x, \quad i=1,2,\ldots.
\end{equation}
The energy-enstrophy conserving scheme, which can be thought of as an
extension of the scheme of \cite{arakawa1981potential} to compatible
finite element methods, conserves $E$, $M$, the total vorticity $C_1$
and the enstrophy $C_2$. The scheme takes $q\in V_h^0(\Omega)$, $u,F\in V_h^1(\Omega)$,
and $D \in V_h^2(\Omega)$, with
\begin{align}
  \label{eq: nonlinear u}
  \int_\Omega w \cdot u_t \diff x + \int_\Omega w\cdot F^\perp q\diff x
  &= \int_\Omega \nabla\cdot w \left(gD + \frac{1}{2}|u|^2\right) \diff x, \quad \forall w \in V_h^1(\Omega), \\
  \int_\Omega \phi \left(D_t + \nabla\cdot F\right)\diff x & = 0, \quad
  \forall \phi \in V_h^2(\Omega), \\
  \int_\Omega w\cdot F\diff x & = \int_\Omega w\cdot u D \diff x, \quad
  \forall w \in V_h^1(\Omega), \\
  \int_\Omega \gamma qD \diff x &= -\int_\Omega \nabla^\perp \gamma \cdot u
  + \gamma f \diff x, \quad \forall \gamma \in V_h^0(\Omega).
  \label{eq: nonlinear q}
\end{align}
In this scheme $u$ and $D$ are the prognostic variables and $F$ and
$q$ can be diagnosed from them using their definitions in integral
form. Despite $u$ and $D$ being prognostic, one can derive a potential
vorticity equation by taking $w=\nabla^\perp\gamma$ in Equation
\eqref{eq: nonlinear u}, for $\gamma\in V_h^0(\Omega)$, which is permitted due
to the compatible structure. Then, combining this with the time
derivative of Equation \eqref{eq: nonlinear q}, we obtain
\begin{equation}
  \label{eq: nonlinear q conservation}
  \int_\Omega \gamma (qD)_t \diff x -\int_\Omega \nabla\gamma \cdot Fq\diff x = 0,
  \quad \forall \gamma \in V_h^0(\Omega),
\end{equation}
which is the standard finite element discretisation of the potential
vorticity equation. Direct computation plus integration by parts then
shows that this scheme conserves $M$, $E$, $C_0$ and $C_1$. Enstrophy
conservation is particularly important because it specifies control of
$q$ in the $L^2$ norm (weighted by $D$), which implies control of
gradients of the divergence-free component of $u$ \emph{via} the
discrete Helmholtz decomposition. Without this control, even with
energy conservation implying control of $u$ in $L^2$, we would observe
unbounded growth in the derivatives of $u$ as the mesh is refined. 

In two dimensions, although energy cascades to large scales, enstrophy
cascades to small scales. Since Equation \eqref{eq: nonlinear q
  conservation} is a regular Galerkin discretisation of the law of
conservation of potential vorticity, this means that enstrophy will
accumulate at the grid scale in general, leading to oscillations.  It
was noted by \cite{McRae14} that it is possible to modify the
potential vorticity flux $qF$ in such a way that enstrophy is
dissipated at the gridscale whilst energy is still conserved, by introducing a finite element version of the anticipated potential vorticity scheme \cite{Ringler10}. This was demonstrated to produce less oscillatory solutions in vortex merger experiments.

All of the properties described in this section can be expressed in
the language of finite element exterior calculus; this was presented in
\cite{Cotter14}. Moreover, note that conservation of energy and enstrophy may not be exact when time is also discretised. Then, the level of conservation will depend upon the chosen scheme and timestep.

\subsection{Stable transport schemes}

When these schemes are extended to three dimensions, we encounter the
problem that energy is now also cascading to small
scales. Historically, the solution to this was to add eddy viscosity
terms to collect energy at small scales. However, this is
unnecessarily dissipative and so does not make the best use of
available resolution.  The modern approach to developing numerical
weather prediction models is to use high-order upwind transport
schemes to control energy and enstrophy in the numerical solution.
The shallow water equations provide a useful vehicle for exploring
these issues in a two-dimensional model that can be run quickly; we
review recent results here.

Since we are using different finite element spaces for different
fields, this calls for different approaches to the design of transport
schemes for them. For $D\in V_h^2(\Omega)$, a standard upwind Discontinuous Galerkin
discretisation can be applied to the mass conservation equation,
\begin{equation}
  \int_\Omega \phi D_t \diff x - \int_\Omega \nabla_h \phi\cdot uD \diff x
   + \int_\Gamma \jump{u\phi}\tilde{D}\diff x = 0, \quad \forall \phi \in V_h^2(\Omega),
\end{equation}
where $\Gamma$  is the set of interior facets in the mesh, the ``jump''
operator is defined for vectors $w$ by
\begin{equation}
  \jump{w} = w^+\cdot n^+ + w^-\cdot n^-,
\end{equation}
with $\pm$ indicating the values of functions on different sides of each
edge and $n^{\pm}$ is the unit normal vector pointing from the $\pm$ side
to the $\mp$ side, $\tilde{D}$ indicates the value of $D$ taken from the
upwind side, \emph{i.e.}\ the side where $u^\pm\cdot n^\pm$ is negative, 
and where $\nabla_h$ is the usual ``broken'' gradient defined in $L^2$ as
\begin{equation}
  \nabla_h|_T \phi = \nabla|_T \phi.
\end{equation}
Here $|_T$ indicates the restriction of a function to a single cell
$T$, for all cells $T$ in the mesh decomposition of $\Omega$.  It was
shown in \cite{Cotter14, shipton:_higher} that it is possible to find
a mass flux $F$ such that
\begin{equation}
  D_t + \nabla\cdot F = 0,
\end{equation}
in a (cheap) post-processing step after the transport scheme step
has been completed.

There are several different approaches to velocity transport. One
possibility is to maintain Equation \eqref{eq: nonlinear q}, and to
modify the potential vorticity flux $qF$ so that Equation \eqref{eq:
  nonlinear q conservation} is replaced by an upwind discretisation of
the potential vorticity conservation law. For example, choosing the
Streamline Upwind Petrov Galerkin \cite{brooks1982streamline}
discretisation
\begin{equation}
  \int_\Omega \left(\gamma + \frac{\eta h F}{|F|}\cdot \nabla \gamma\right)
  (Dq)_t \diff x  - \int_\Omega \nabla\gamma\cdot qF \diff x
  + \int_\Omega \frac{\eta h F}{|F|}\cdot\nabla\gamma \nabla \cdot (Fq)
  \diff x = 0, \quad \forall \gamma \in V_h^0(\Omega),
\end{equation}
where $0<\eta<1$ is a constant upwind parameter and $h(x)$ is an indicator
function returning the size of the cell containing the point $x$.
This is a high-order consistent upwind discretisation. We obtain
a modified potential vorticity flux
\begin{equation}
  qF - \frac{\eta h F}{|F|}\left((qD)_t + (Fq)\right),
\end{equation}
which can then replace $qF$ in Equation \eqref{eq: nonlinear u}.
Since  this flux is still proportional to $F$, energy is still
conserved (neglecting any energy changes from the $D$ transport
scheme). In \cite{shipton:_higher}, this idea was extended to higher
order Taylor-Galerkin timestepping schemes. A related approach is to
introduce dual grids, so that the dual grid potential vorticity is
represented in a discontinuous finite element space. At lowest order,
these are $P_0$ spaces (piecewise constant functions) that can
be treated with finite volume advection schemes that achieve higher
spatial order by increasing the stencil size. A discretisation for the
shallow water equations using finite element spaces for arbitrary
polygonal grids, and mappings between primal and dual grids, was
developed in \cite{thuburn2015primal}.   Results demonstrating this
approach on a standard set of test cases on the sphere and comparing
cubed sphere and dual icosahedral grids were also presented.

A difficulty with the approach described above is the loss of
consistency near the boundary in Equation \eqref{eq: nonlinear u}, due
to the fact $V_h^0(\Omega)$ consists of functions that vanish on the boundary.
It can be shown that this results in a loss of consistency of one
order, \emph{i.e.}\ from 2nd order to 1st order consistency. There is a
similar problem with the dual grid approach. An alternative approach
to this is to treat the curl operator directly in the velocity
equation (still in vector invariant form) without introducing an
intermediate potential vorticity variable. This can be developed by
dotting with a test function $w$ and integrating over a single cell
$T$, then integrating by parts to obtain
\begin{equation}
  \int_T w\cdot \frac{F^\perp}{D}(\nabla^\perp\cdot u + f)\diff x =
  -\int_T \nabla^\perp\left(w\cdot \frac{F^\perp}{D}\right)
  \cdot u \diff x
  +\int_{\partial T} n^\perp\cdot \frac{F^\perp}{D}\tilde{u}\diff S,
\end{equation}
where $n$ is the unit outward pointing normal to $\partial T$, the
boundary of $e$, and $\tilde{u}$ is either the averaged or the upwind
value of $u$ on $\partial T$. Even in the presence of upwinding, this
discretisation is still energy conserving, since the energy equation
sets $w=F$, and $F\cdot F^\perp=0$. In
\cite{natale:_variat_h_finit_elemen_discr} it was shown that the
application of this discretisation can be derived from a variational
principle; the paper also proved convergence (suboptimal by 1 degree)
of solutions for the centred flux variant of the discretisation in two
and three dimensions.

Finally, we remark that all of these velocity transport schemes
require solution of global mass matrix systems, one for each stage of
an explicit Runge-Kutta method, for example. If one is willing to give
up energy conservation, then an Embedded Discontinuous Galerkin
transport scheme \cite{cotter2016embedded} can be used. In these
schemes, the advected field $u \in V_h^1(\Omega)$ is injected into the broken
space $\hat{V}_h^1(\Omega)$ (the same finite element space but with no
continuity constraints between cells) at the start of the
timestep. Then, one or more explicit upwind Discontinuous Galerkin
timesteps are taken using a stable Runge-Kutta scheme. Finally, the
solution is projected back into $V_h^1(\Omega)$. This final step does involve a
global mass solve but this can be incorporated into a semi-implicit
update at no extra cost. We are currently investigating the stability
and accuracy of these schemes.

\subsection{Semi-implicit implementation}
\label{sec:semi-implicit}

In semi-implicit discretisations, a crucial aspect is the efficient
solution of the resulting coupled linear system that must be solved
during each nonlinear iteration of the semi-implicit scheme. For
example, consider the following implicit timestepping scheme.
\begin{align} \nonumber
  \int_\Omega w\cdot u^d \diff x & =
  \int_\Omega w\cdot u^n \diff x +
  \frac{\Delta t}{2}\int_\Omega w \cdot f(u^n)^{\perp}\diff x  \\
&  \qquad
  +\frac{\Delta t}{2}\int_\Omega
  \nabla\cdot w gD^n \diff x, \quad \forall w\in V_h^1(\Omega), \\
  u^a & = \Phi^u(u^d; u^{n+1/2}), \\
  D^{n+1} & = \Phi^D(D^d; u^{n+1/2}), \\ \nonumber
  \int_\Omega w\cdot u^{n+1} \diff x & =
  \int_\Omega w\cdot u^a \diff x +
  \frac{\Delta t}{2}\int_\Omega w \cdot f(u^{n+1})^{\perp}\diff x \\
  & \qquad
  + \frac{\Delta t}{2}\int_\Omega
 (\nabla\cdot w) gD^{n+1} \diff x, \quad \forall w\in V_h^1(\Omega),
\end{align}
where $\Phi^u(v; u^{n+1/2})$ is the application of a chosen velocity
advection scheme over one timestep, with initial condition $v$ and
using advecting velocity $u^{n+1/2}$, whilst $\Phi^D$ is the same
thing but for layer depth. A Picard iteration to obtain $u^{n+1}$,
$D^{n+1}$ requires the solution of the following linear system
for the iterative corrections to $u^{n+1}$ and $D^{n+1}$, 
\begin{align}
  \int_\Omega w\cdot \Delta u \diff x
  + \frac{\Delta t}{2}\int_\Omega w\cdot f\Delta u^\perp\diff x & \nonumber \\
\label{eq:delta u}
  \qquad  - \frac{\Delta t}{2}\int_\Omega (\nabla\cdot w) g\Delta D \diff x
  & = -R_u[w], \quad \forall w\in V_h^1(\Omega), \\
  \int_\Omega \phi \Delta D \diff x + \frac{\Delta t}{2}
  \int_\Omega \phi (\nabla\cdot H )\Delta u \diff x & = -R_D[\phi],  \quad \forall \phi\in V_h^2(\Omega),
\label{eq:delta D}
\end{align}
where $R_u[w]$ and $R_D[\phi]$ are residuals for the implicit system,
and $H$ is the value of $D$ in a state of rest. After obtaining
$\Delta u$ and $\Delta D$, we replace $u^{n+1}\mapsto u^{n+1}+\Delta
D$, $D^{n+1}+\Delta D$, and repeat the iterative procedure again for a
fixed number of times. In staggered finite difference models, the
standard approach to solving Equations (\ref{eq:delta u}-\ref{eq:delta
  D}) is to neglect the Coriolis term, and then to eliminate $u$ to obtain a
discrete Helmholtz equation for which smoothers such as SOR are
convergent, and therefore the system is amenable to multigrid methods
or Krylov subspace methods (or a combination of the two). This
approach is problematic in the compatible finite element setting
because the velocity mass matrix is globally coupled and thus has a
dense inverse.

An alternative approach, which also allows us to keep the Coriolis
term in the linear system, is called hybridisation; this technique
dates back to the 1960s in engineering use, and to the late 1970s and
1980s in terms of numerical analysis \cite{boffi2013mixed}.  In this
approach, we consider solutions $u$ in the broken discontinuous space
$\hat{V}_h^1(\Omega)$. Then, Lagrange multipliers are introduced to enforce
continuity of the normal component of $u$, to force it back into
$V_h^1(\Omega)\subset\hat{V}_h^1(\Omega)$. We define a trace space for the Lagrange
multipliers, $W$, consisting of piecewise polynomials on each edge
$e\in \Gamma$, with no continuity requirements between edges, and so
the degree of the polynomials is the same as the degree of the
normal component of functions from $V_h^1(\Omega)$. The resulting system is
\begin{align}
  \int_\Omega w\cdot \Delta u \diff x \nonumber
  + \frac{\Delta t}{2}\int_\Omega w\cdot f\Delta u^\perp\diff x & \\
  \qquad  - \frac{\Delta t}{2}\int_\Omega (\nabla\cdot w) g\Delta D \diff x
   + \int_\Gamma \lambda \jump{w}\diff S
  & = -R_u[w], \quad \forall w\in \hat{V}_h^1(\Omega), \\
  \int_\Omega \phi \Delta D \diff x + \frac{\Delta t}{2}
  \int_\Omega \phi(\nabla\cdot H) \Delta u \diff x & = -R_D[\phi],  \quad \forall \phi\in V_h^2(\Omega), \\
  \int_\Gamma \mu \jump{u} \diff S & = 0, \quad \forall \mu\in W.
\label{eq:hybrid jump}
\end{align}
We see that if $u$, $D$, $\lambda$ solve this system of equations,
then $u\in V_h^1(\Omega)$, through Equation \eqref{eq:hybrid jump}. Further, if
we take $w\in V_h^1(\Omega)\subset \hat{V}_h^1(\Omega)$, then the $\lambda$ term vanishes
and so $u$, $D$ also solves Equations (\ref{eq:delta u}-\ref{eq:delta
  D}).  Further, since $\hat{V}_h^1(\Omega)$ and $V_h^2(\Omega)$ are \emph{both}
discontinuous spaces, they can be sparsely eliminated to obtain a
positive-definite system for $\lambda$. The resulting system is
symmetric and positive definite, so can be solved using the conjugate
gradient method preconditioned by Gauss-Seidel iteration, for example.
In addition, it can be shown that $\lambda$ is in fact an
approximation to $g\Delta D\Delta t/2$; this fact can then be used to
show that a multigrid method for the system for $\lambda$ will
converge \cite{gopalakrishnan2009convergent}. Having solved for
$\lambda$, $u$ and $D$ can be obtained by back-substitution separately
in each element. If the system for $\lambda$ has only been solved
approximately, then some form of projection (averaging values on each
side of element boundaries, for example) is required to obtain $\Delta
u\in V_h^1(\Omega)$. This approach to solving the linear system was explored in
the context of shallow water equations on the sphere in
\cite{shipton:_higher}. We are currently extending the implementation
and analysis to hybridisation of the linear system arising in three-dimensional compressible dynamical cores.

\section{Three-dimensional compatible finite element spaces}
\label{sec:3d}
The shallow water equations provide a useful stepping stone for
development of numerical methods for three-dimensional dynamical cores. We now
discuss the additional aspects that occur when building
discretisations in three dimensions. The extension of vertical grid
staggering to finite element methods, including high-order finite
elements, was introduced by \cite{gmd-2015-275}. Here, we consider the
use of three-dimensional compatible finite elements that have the same
structure in the vertical.

In analogy to the discussion of Section~\ref{sec:2D} we start with the following definition. 

\begin{definition}\label{def:3dspaces}
  Let $V_h^0(\Omega)\subset H^1(\Omega)$, $V_h^1(\Omega) \subset H(\mr{curl};\Omega)$ , $V_h^2(\Omega) \subset H(\ddiv;\Omega)$ and $V_h^3(\Omega) \subset L^2(\Omega)$ be a sequence of finite element spaces ($V_h^0(\Omega)$
  and $V_h^3(\Omega)$ contain scalar valued functions, whilst $V_h^1(\Omega)$ and $V_h^2(\Omega)$ contain
  vector valued functions). These spaces are called \emph{compatible}
  if:
  \begin{enumerate}
  \item $\nabla\psi \in V_h^1(\Omega)$, $\forall \psi \in V_h^0(\Omega)$,
  \item $\nabla\times v \in V_h^2(\Omega)$, $\forall v \in V_h^1(\Omega)$,
  \item $\nabla\cdot u \in V_h^3(\Omega)$, $\forall u \in V_h^2(\Omega)$, and
  \item there exist bounded projections $\pi^i$, $i=0,1,2,3$,
    such that the following diagram commutes.
  \begin{equation}
    \begin{tikzcd}[column sep=2em]
      H^1(\Omega)\arrow{r}{\nabla}\arrow{d}{\pi^0} &
      H(\rm{curl}; \Omega) \arrow{r}{\nabla\cdot}\arrow{d}{\pi^1}&
      H(\rm{div}; \Omega) \arrow{r}{\nabla\cdot}\arrow{d}{\pi^2}&
      L^2(\Omega) \arrow{d}{\pi^3} \\
      V_h^0(\Omega) \arrow{r}{\nabla}&
      V_h^1(\Omega) \arrow{r}{\nabla\times}&
      V_h^2(\Omega) \arrow{r}{\nabla\cdot}&
      V_h^3(\Omega)
    \end{tikzcd}
    \label{eq:3d diagram}
  \end{equation}
  \end{enumerate}
\end{definition}

In a compatible finite element discretisation of the three-dimensional
compressible Euler equations, we choose the velocity $u\in V_h^2(\Omega)$ and
the density $\rho \in V_h^3(\Omega)$. We will consider how to represent
temperature in a later section. It is an established fact in numerical
weather prediction that mesh cells must be aligned in vertical columns.
This is because the dynamics is very close to hydrostatic balance, which
is obtained by solving
\begin{equation}
\pp{p}{z} = -g\rho.
\end{equation}
In large scale models, the domain is very thin, so cells have much
greater extent in the horizontal than the vertical direction. If cells
are not aligned in columns, then the large horizontal errors in $p$
will be coupled into the hydrostatic equation, and the discretisation
generates spurious waves that rapidly degrade the state of hydrostatic
balance. We will see later that using a mesh that is aligned in columns
leads to well-defined hydrostatic pressures when compatible finite element
spaces are used. Hence, we consider meshes constructed from hexahedra
or triangular prisms. Where all cells have side walls
aligned in the vertical direction. We do allow for sloping cell tops
and bottoms, which facilitates terrain-following meshes.

Under these constraints, we build cells in the following way. We
construct a reference cell as the tensor product of a horizontal cell (a triangle or quadrilateral) and an interval. Then, each cell in the physical mesh is obtained by a mapping from the reference cell into $\mathbb{R}^3$, such that the above constraints are satisfied.

To build finite element spaces on this mesh, we first construct a
finite element on the reference cell. The finite element is formed
from tensor products of finite elements defined on the base triangle or square, and finite elements defined on the vertical interval. Of course, the square can also then be defined as the product of two intervals.  We then define finite element spaces on the mesh \emph{via} the pullback mappings listed in Table~\ref{tab:Fstar}. 
More precisely, if $F_K$ is a mapping from the
reference cell $\hat{K}$ to the physical cell $K$, then finite element
functions are related \emph{via}
\begin{itemize}
\item $\psi \in V_h^0(\hat{K}) \implies \psi\circ F_K^{-1} \in V_h^0(K)$,
\item $v \in V_h^1(\hat{K}) \implies J^{-T}\,v \circ F^{-1}_K \in V_h^1(K)$,
\item $u \in V_h^2(\hat{K}) \implies Ju/\det(J)\circ F_K^{-1} \in V_h^2(K)$,
\item $\rho \in V_h^3(\hat{K}) \implies \rho/\det(J)\circ F_K^{-1} \in V_h^3(K)$,
\end{itemize}
where $J$ is the Jacobian matrix $DF_K$.
Then, as in the two-dimensional case, given a decomposition $\mc T_h$ of the domain $\Omega$ in elements $K$, the finite element space $V^k_h(\Omega)$ on $\mc T_h$ is defined by requiring $V^k_h(K) = F^{-1*}_K(V^k_h(\hat{K}))$ and $V^k_h(\Omega)\subset HV^k(\Omega)$.

A key problem is that for the
velocity space $V^2_h(\Omega)$, this transformation results in non-polynomial
functions when the transformation is not affine (a linear
transformation composed with translation). We obtain non-affine
transformations whenever we use spherical shell
(\emph{i.e.}\ atmosphere-shaped) domains, or terrain-following coordinates,
whether we have triangular prismatic or hexahedral cells. This problem was discussed and the general loss of error quantified using FEEC in the case of compatible finite element spaces on cubical elements in \cite{Arnold14}. Here, we extend this discussion to cover the triangular prismatic cells that we wish to use in horizontally unstructured mesh numerical weather prediction. We also show that this
general loss of error is still possible for columnar meshes. For the
case of manifolds embedded in $\mathbb{R}^n$, this problem was
addressed by \cite{Holst12}, where it was shown that if the
computational mesh is a piecewise polynomial approximation of a smooth
manifold, then functions can be approximated at the optimal rate, and
further that numerical solutions of mixed Hodge Laplacian problems
converge optimally. Here, we extend these results to spherical shell
domains in the case of extruded meshes where the base mesh is either a
cubed sphere mesh of quadrilaterals, or an arbitrarily structured mesh
of triangles. This extension makes use of a transformation of the
sphere domain to a three-dimensional submanifold of $\mathbb{R}^4$, for
which an affine mesh is possible.


The rest of this section is structured as follows. In
Section \ref{sec:tfamilies} we describe the construction of tensor
product finite element spaces, including the extension to triangular prism elements. In Section~\ref{sec:curvielem} we prove some approximation properties of these spaces. In Section \ref{sec:glotran}, we discuss global transformations of meshes and analyse how these affect the approximation properties of finite element spaces. This analysis is then applied in Section \ref{sec:shell} to the case of a spherical annulus.

\subsection{Tensor product finite element spaces}
\label{sec:tfamilies}
The construction of tensor product compatible spaces is addressed in \cite{Arnold13,Arnold14} with application to cubic meshes. However, the procedure for the definition of such spaces is rather general and applies to any mesh whose elements are generated as Cartesian product of simplices. This covers both cell types that we are interested in, triangular prisms and hexahedra. The construction relies on having two simplices  $T\subset\mathbb{R}^n$ and $S\subset \mathbb{R}^m$ and a polynomial complex of compatible finite element spaces on each of these. Then one can produce a polynomial complex of compatible finite element spaces on $ T\times S$ using the ones defined on  $T$ and $S$.
Here we only show the result of this procedure for our case of interest, that is when $n=2$, $m=1$ and $T\times S$ is a prism. 

We start by selecting polynomial spaces on $S$ and $T$ on which we base the tensor product construction. Since $S$ is one-dimensional, we can only take on $S$ the sequence of spaces ($CG_s$,$DG_{s-1}$), for $s\geq1$, as this is the only valid choice of compatible polynomial spaces.  
As for the complex on $T$ we have more freedom. Here, we restrict ourself to the examples discussed in Section~\ref{sec:2D}. In particular, if we consider on $T$ the sequence of compatible spaces ($CG_r$,$RT_{r-1}$, $DG_{r-1}$) we obtain the following  compatible spaces on $T\times S$,
\begin{align*}
\mc{E}_{r,s}^{-}V^0(T\times S) &= CG_r(T) \otimes CG_s(S),\\
\mc{E}_{r,s}^{-}V^1(T\times S) &= [RT_{r-1}(T) \otimes CG_s(S)] \oplus [CG_r(T) \otimes  DG_{s-1} (S)\,\hat{z}],\\
\mc{E}_{r,s}^{-}V^2(T\times S) &= [DG_{r-1} (T)\, \hat{z} \otimes  CG_s(S)] \oplus [RT_{r-1}(T) \times  DG_{s-1} (S)\,\hat{z}],\\
\mc{E}_{r,s}^{-}V^3(T\times S) &= DG_{r-1}(T) \otimes DG_{s-1}(S), 
\end{align*}
where $\hat{z}$ is a unit vector field oriented in the extrusion direction. Alternatively, if we use the sequence ($CG_r$,$BDM_{r-1}$, $DG_{r-2}$) complex on $T$ we obtain the following compatible spaces on $T\times S$
\begin{align*}
\mc{E}_{r,s}V^0(T\times S) &= CG_r(T) \otimes CG_s(S),\\
\mc{E}_{r,s}V^1(T\times S) &= [BDM_{r-1}(T) \otimes CG_s(S)] \oplus [CG_r(T) \otimes  DG_{s-1} (S)\, \hat{z}],\\
\mc{E}_{r,s}V^2(T\times S) &= [DG_{r-2} (T)\, \hat{z} \otimes  CG_s(S)] \oplus [BDM_{r-1}(T) \times  DG_{s-1} (S)\,\hat{z}],\\
\mc{E}_{r,s}V^3(T\times S) &= DG_{r-2}(T) \otimes DG_{s-1}(S).
\end{align*}

Note that in both cases these spaces have been defined in the way they would appear in the complex, \emph{i.e.}\ $d^k: \mc{E}_{r,s}V^k \rightarrow \mc{E}_{r,s}V^{k+1}$ and the same for the $\mc{E}_{r,s}^-V^k$ spaces. The fact that such spaces can be called compatible follows from the possibility to define bounded projections in accordance to Definition~\ref{def:3dspaces}. This can indeed be done by exploiting the tensor product structure
as shown in \cite{Arnold14}.
More details on the definition and the implementation of the tensor product spaces discussed in this section can also be found in  \cite{mcrae2014automated}.

\subsection{Approximation properties and curvilinear elements}
\label{sec:curvielem}
In this section, we analyse the convergence properties of the finite element spaces defined in Section \ref{sec:tfamilies} for specific classes of maps $F_K$.
In particular, we assume that the mesh is generated by either affine or multi-linear transformations and, for the resulting finite-dimensional space $V^k_h(\Omega)$, we are interested in providing bounds for the quantity
\begin{equation}\label{eq:best}
\inf_{p\in V^k_h(\Omega)}\|u-p\|_{L^2(\Omega)}.
\end{equation}
As for non-affine meshes, we will mostly use the results in \cite{Arnold14}, which focus on tensor product spaces on curvilinear cubic elements, and extend them to general tensor product elements. 

We start by recalling that a finite element space is constructed using a set of functions on a reference element $\hat{K}$ and by a set of maps $F_K:\hat{K} \rightarrow K$, where $K$ is an element in the domain decomposition $\mc T_h$. The maps $F_K$ can be arbitrary apart from some regularity conditions. These are determined by the way the pullback of $F_K$ transforms under Sobolev norms, which is described in the following theorem. 
\begin{theorem}\label{th:bound}
{(\cite{Arnold14}, Theorem 2.1)}
Let $M$ be  a positive constant and $r$ a non-negative integer. There exists a constant $C$ depending only on $r$, $M$ and the dimension of the domain $n$, such that
\begin{equation}
\|F_K^*v\|_{H^rV^k(\hat{K})} \leq C \|v\|_{H^rV^k({K})}, \quad v \in H^rV^k(K),
\end{equation}
whenever $\hat{K}$ is a domain in $\mathbb{R}^n$ and $F_K:\hat{K}\rightarrow K$ is a $C^{r+1}$ diffeomorphism satisfying
\begin{equation}
\max_{1\leq s\leq r+1} |F_K|_{W^s_\infty(\hat{T})} \leq M , \quad
|F_K^{-1}|_{W^1_\infty(\hat{T})}\leq M . 
\end{equation}
\end{theorem}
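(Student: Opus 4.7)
The plan is to prove the estimate by a direct calculation for each choice of $k$, using the chain rule for composition, the Leibniz rule for products with the Jacobian-dependent factors that appear in the Piola transforms, and the change of variables formula. The three pullback formulas that occur in the complex differ by a factor that is a smooth function of the entries of $J$ and $\det J$, so the argument follows the same template in each case, and the only $k$-dependent work is to show that those extra factors lie in $W^r_\infty(\hat K)$ with bounds depending only on $M$, $r$, and $n$.

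First I would dispatch the scalar case $k=0$, where $F_K^*v = v\circ F_K$. Fa\`a di Bruno's formula expresses $D^\alpha(v\circ F_K)$, for $|\alpha|\le r$, as a sum of products of derivatives $D^\beta v$ evaluated at $F_K$ (with $|\beta|\le|\alpha|$) against products of derivatives of $F_K$ of total order at most $|\alpha|$. The second factor is bounded pointwise by a constant depending only on $r$, $n$, and $M$, using $|F_K|_{W^s_\infty(\hat K)}\le M$ for $1\le s\le r+1$. To control the first factor, I would use the change of variables $y=F_K(x)$ to rewrite
\begin{equation*}
\int_{\hat K}|D^\beta v(F_K(x))|^2\,dx=\int_K |D^\beta v(y)|^2\,|\det DF_K^{-1}(y)|\,dy,
\end{equation*}
and bound $|\det DF_K^{-1}|\le |F_K^{-1}|_{W^1_\infty}^n\le M^n$. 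Summing over $|\alpha|\le r$ gives the desired inequality with $k=0$.

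Next I would treat the remaining cases by reducing them to the scalar case. The pullbacks in $V^1$, $V^2$, and $V^n$ multiply $v\circ F_K$ by one of $J^T$, $J^{-1}\det J$ (or $J/\det J$, depending on the convention), or $1/\det J$. The Leibniz rule then distributes $D^\alpha$ across these products, so the estimate reduces to the scalar bound just proved together with $W^r_\infty(\hat K)$ bounds on each of the entries of these matrix-valued weights. The cofactor entries of $J$ are polynomials in the entries of $DF_K$, so their $W^r_\infty$ norms are controlled directly by $|F_K|_{W^{r+1}_\infty}\le M$. For $1/\det J$ I would note that $|\det DF_K^{-1}|\le M^n$ gives a uniform positive lower bound on $|\det J|$ (namely $M^{-n}$); differentiating $(\det J)(\det J)^{-1}=1$ repeatedly and using induction on $r$ then bounds $1/\det J$ in $W^r_\infty(\hat K)$ in terms of $M$, $r$, and $n$. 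The entries of $J^{-1}$ are recovered from $J^{-1}=(\det J)^{-1}\mathrm{adj}(J)$.

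The main obstacle is precisely this last step: propagating the two hypotheses (a bound on $F_K$ in $W^{r+1}_\infty$ and a bound only on the first derivative of $F_K^{-1}$) into uniform $W^r_\infty$ control of $J^{-1}$ and $1/\det J$. Once the pointwise lower bound on $|\det J|$ is secured from the $W^1_\infty$ bound on $F_K^{-1}$, the rest is bookkeeping using multi-index calculus, but the combinatorial accounting of how many products of derivatives of $F_K$ arise at each differentiation level (and verifying that the resulting constant depends only on $r$, $M$, and $n$, as claimed) is the most delicate part of the write-up.
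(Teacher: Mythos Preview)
The paper does not supply its own proof of this theorem: it is stated as a quotation of Theorem~2.1 in \cite{Arnold14} and is used as a black box in the subsequent analysis (in particular in the proof of Theorem~\ref{th:gtransf}). So there is no in-paper argument to compare against.

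Your sketch is the standard route to this estimate and is correct in outline. The scalar case via Fa\`a di Bruno plus change of variables is exactly how one controls $\|v\circ F_K\|_{H^r}$ by $\|v\|_{H^r}$ under the stated bounds on $F_K$ and $F_K^{-1}$, and the reduction of the vector cases to the scalar one by Leibniz against the Piola weights is also the right idea. Two small points worth tightening: (i) for $k=n$ the pullback weight is $\det J$, not $1/\det J$ (your parenthetical about conventions hints you were aware of this, and in any case you already handle both $\det J$ and its reciprocal); (ii) the bound $|\det DF_K^{-1}|\le M^n$ should carry a dimensional constant, e.g.\ $|\det A|\le C(n)\,|A|^n$ for the matrix norm used, which is harmless but should be recorded so that the final constant honestly depends only on $r$, $M$, and $n$. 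With those cosmetic fixes, your argument establishes the theorem and is essentially what one finds in the cited reference.
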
 

In order to get an estimate for the quantity in Equation~\eqref{eq:best}, the Cl\'ement interpolant construction is generally invoked. First, for any $q$-dimensional domain $Q$ define $\mc P_r (Q)$ to be the space of polynomial scalar functions on $Q$ with degree up to $r$. Moreover, let $\mc P_rV^k(Q)$ be equal to $\mc P_r (Q)$ if $k=0,q$, or to the space of vector fields with coefficients in $\mc P_r (Q)$ if $0<k<q$.
If we let $V^k_h(K)$ be the restriction of the finite element space $V^k_h(\Omega)$ to the element $K$ and assume $\mc P_r V^k(K)\subset V^k_h(K)$, then one can design a projection $\pi:H^s V^k(\Omega) \rightarrow V^k_h(\Omega)$, called the Cl\'ement interpolant, for any $(n-k)/2<s \leq r+1$, such that $\pi|_K u = (\pi u)|_K$ for any $u \in H^s V^k(\Omega)$ and
\begin{equation}\label{eq:locest}
\|u-\pi u\|_{L^2({K})} \leq C h_K^{s}|u|_{H^{s}V^k({K})}, \quad u \in {H^{s}V^k({K})}\, ,
\end{equation}
where $h_K$ denotes the diameter of $K$. Summing up over all elements both sides of Equation~\eqref{eq:locest} yields the global estimate
\begin{equation}\label{eq:gloest}
\|u-\pi u\|_{L^2(\Omega)} \leq C h^{s} \sum_{T\in \mc T_h}|u|_{H^{s}V^k(T)} =C h^{s}|u|_{H^{s}V^k(\Omega)}, \quad u \in {H^{s}V^k(\Omega)},
\end{equation}
where $h$ is the maximum element diameter in the decomposition $\mc T_h$ of $\Omega$. Clearly, then, Equation~\eqref{eq:gloest} implies the same estimate for the best approximation error in Equation~\eqref{eq:best}.

Therefore, for a given choice of the function space $V^k_h(\hat{K})$ on the reference element $\hat{K}$, one is led to determine the conditions under which $\mc P_r V^k(K)\subset V^k_h(K)= F^{-1*}_K(V_h^k(\hat{K}))$ in order for the estimates in Equation~\eqref{eq:gloest} to hold.
We carry out this analysis for the polynomial spaces defined in Section~\ref{sec:tfamilies}, restricting the choice of $F_K$ to affine and multilinear transformations as in \cite{Arnold14}. The results are expressed in the following theorems which essentially are a restatement of Theorem 6.1 in \cite{Arnold14}, to the specific spaces considered here. 

We assume that the mesh is constituted by tensor product elements, so that we look for a polynomial space on the reference element $\hat{K}$ belonging to either one of the families $\mc E_{r,s}$ or $\mc E_{r,s}^- $ such that the estimates in Equations~\eqref{eq:locest} and \eqref{eq:gloest} hold. The sufficient conditions for convergence at a given rate relative to the family $\mc E_{r,s}$ are stated in the following theorem.

\begin{theorem}\label{th:extruded}
Consider a three-dimensional element $\hat{K}=\hat{T}\times\hat{S}$ obtained as tensor product of a two-dimensional simplex $\hat{T}$ and a one-dimensional simplex $\hat{S}$.
Let $F_K$ be an affine map and $\mc E_{r+\nu,r+\mu} V^k(\hat{K})\subset V^k_h(\hat{K})$, where $\nu = \min(k,2)$ and $\mu = \min(k,1)$; or let $F_K$ be a multilinear map and $\mc  E_{2(r+k),r+k}V^k(\hat{K})\subset V^k_h(\hat{K})$. Then $\mc P_rV^k(K)\subset V^k_h(K)$ and Equations~\eqref{eq:locest} and \eqref{eq:gloest} hold.
\end{theorem}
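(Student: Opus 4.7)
The plan is to reduce the polynomial containment $\mc P_r V^k(K)\subset V^k_h(K)$ to a bidegree calculation on the reference element. Since $V^k_h(K)=F^{-1\ast}_K(V^k_h(\hat K))$, this containment is equivalent to $F^\ast_K(\mc P_r V^k(K))\subset V^k_h(\hat K)$, and by the hypothesis of the theorem it suffices to show that $F^\ast_K(\mc P_r V^k(K))$ lies in $\mc E_{r+\nu,r+\mu}V^k(\hat K)$ (affine case) or $\mc E_{2(r+k),r+k}V^k(\hat K)$ (multilinear case). The argument follows the outline of Theorem~6.1 of \cite{Arnold14} but must be adapted to the prismatic tensor-product spaces of Section~\ref{sec:tfamilies}, in which the horizontal and vertical polynomial degrees enter asymmetrically.

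In the affine case, $J=DF_K$ and $\det J$ are constant, so each pullback formula (identity, covariant Piola, contravariant Piola, determinant weighting) preserves total polynomial degree and yields $F^\ast_K(\mc P_r V^k(K))\subset\mc P_r V^k(\hat K)$. It then remains to verify the purely algebraic inclusion $\mc P_r V^k(\hat K)\subset \mc E_{r+\nu,r+\mu}V^k(\hat K)$. I would do this by separating variables: write any polynomial of total degree $\leq r$ as $\sum_c\hat z^c p_c(\hat x,\hat y)$ with $\deg p_c\leq r-c$, and match each summand against the explicit $CG(\hat T)\otimes CG(\hat S)$, $BDM(\hat T)\otimes DG(\hat S)$, or $DG(\hat T)\otimes DG(\hat S)$ factor exhibited for $\mc E_{r,s}V^k$ in Section~\ref{sec:tfamilies}. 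The shifts $\nu=\min(k,2)$ and $\mu=\min(k,1)$ are exactly those needed to compensate for the degree drops ($0$, $-1$, or $-2$ horizontally, and $0$ or $-1$ vertically) built into the $\mc E_{r,s}$ family as $k$ increases.

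The multilinear case is the principal obstacle. Such a map on the prism takes the form $F_K(\hat x,\hat y,\hat z)=(1-\hat z)F_{\mr{bot}}(\hat x,\hat y)+\hat z F_{\mr{top}}(\hat x,\hat y)$ with $F_{\mr{bot}},F_{\mr{top}}:\hat T\to\mathbb{R}^3$ affine. A direct computation shows that the two horizontal columns of $J$ have bidegree $(0,1)$ and the vertical column has bidegree $(1,0)$, so $\det J$ has bidegree $(1,2)$, and the entries of $\mr{adj}(J)$ have bidegree $(1,1)$ in the first two columns and $(0,2)$ in the third. Since each component of $F_K$ itself has bidegree $(1,1)$, composition with a polynomial of total degree $\leq r$ produces an object of bidegree at most $(r,r)$. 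Substituting these bounds into the pullback formulas $J^T(v\circ F_K)$ for $V^1$, $\mr{adj}(J)(u\circ F_K)$ for $V^2$, and $\det J\cdot(\rho\circ F_K)$ for $V^3$, I would compute the bidegree of each component of $F^\ast_K u$ for $u\in\mc P_r V^k(K)$ and verify term-by-term that it fits into the appropriate summand of the direct-sum decomposition of $\mc E_{2(r+k),r+k}V^k(\hat K)$.

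With the polynomial containment in hand, the local estimate \eqref{eq:locest} follows by the standard route: construct a Cl\'ement-type quasi-interpolant on $V^k_h$ that reproduces $\mc P_rV^k$, apply a Bramble--Hilbert argument on $\hat K$, and transfer $H^s$-seminorms between $\hat K$ and $K$ using Theorem~\ref{th:bound}, with shape-regularity of $\mc T_h$ supplying a uniform constant. Summation over cells then yields \eqref{eq:gloest}. The principal technical difficulty is the bidegree bookkeeping in the multilinear case: the $(0,1)$ versus $(1,0)$ anisotropy of the prismatic Jacobian forces one to track horizontal and vertical polynomial degrees separately for every component of every pullback, and the interaction of this anisotropy with the Piola transformations for $V^1$ and $V^2$ is what dictates the asymmetric form $\mc E_{2(r+k),r+k}$ of the sufficient tensor-product space.
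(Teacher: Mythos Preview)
Your approach is correct and matches the paper's, which does not give a standalone proof but simply states that the theorem is ``essentially a restatement of Theorem~6.1 in \cite{Arnold14}'' specialised to the prismatic tensor-product spaces; your outline supplies precisely the bidegree bookkeeping that this citation stands in for. One small correction for when you carry out the multilinear case: it is the \emph{rows} of $\mr{adj}(J)$, not the columns, that have bidegrees $(1,1),(1,1),(0,2)$ (row $i$ of $\mr{adj}(J)$ is the cross product of the two columns of $J$ other than column $i$), and with this fix the component-wise verification against the summands of $\mc E_{2(r+k),r+k}V^k(\hat K)$ goes through exactly as you describe.
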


Finally, we consider a specific class of transformations that leave the coordinates on one of the two simplices $\hat{T}$ or $\hat{S}$  invariant up to an affine transformation. This corresponds to the case of local area models with terrain-following meshes.
Specifically, we assume $F_K = A_K \circ D_K$ where $D_K:\hat{K}\rightarrow {\tilde{K}}$ is a special multilinear deformation of the reference element and $A_K:\tilde{K}\rightarrow K$ is an arbitrary affine map. The map $D_K$ is defined in coordinates by $D_K =(I_T,M_S)$ with $I_T$ being the identity map on $\hat{T}$ and $M_S:\hat{K}\rightarrow{\mathbb{R}^m}$ being a sufficiently smooth multilinear map. We call the maps $F_K$ constructed in this way as multilinear maps invariant on $\hat{T}$. Analogously, if $D_K =(M_T,I_S)$ with $I_S$ being the identity map on $\hat{S}$ and $M_T:\hat{K}\rightarrow{\mathbb{R}^n}$ being a sufficiently smooth multilinear map, we call the relative $F_K$ as a multilinear map invariant on $\hat{S}$. A graphical representation of such constructions is given in Figure~\ref{fig:scheme1}.
The sufficient conditions for convergence at a given rate relative to these classes of transformations and for the family $\mc E_{r,s}$ are stated in the following theorem.

\begin{figure}[h]
\centering
\begin{tikzpicture}
    \node[anchor=south west,inner sep=0] at (0,0)
 {\centering    
 \includegraphics[scale=0.6]{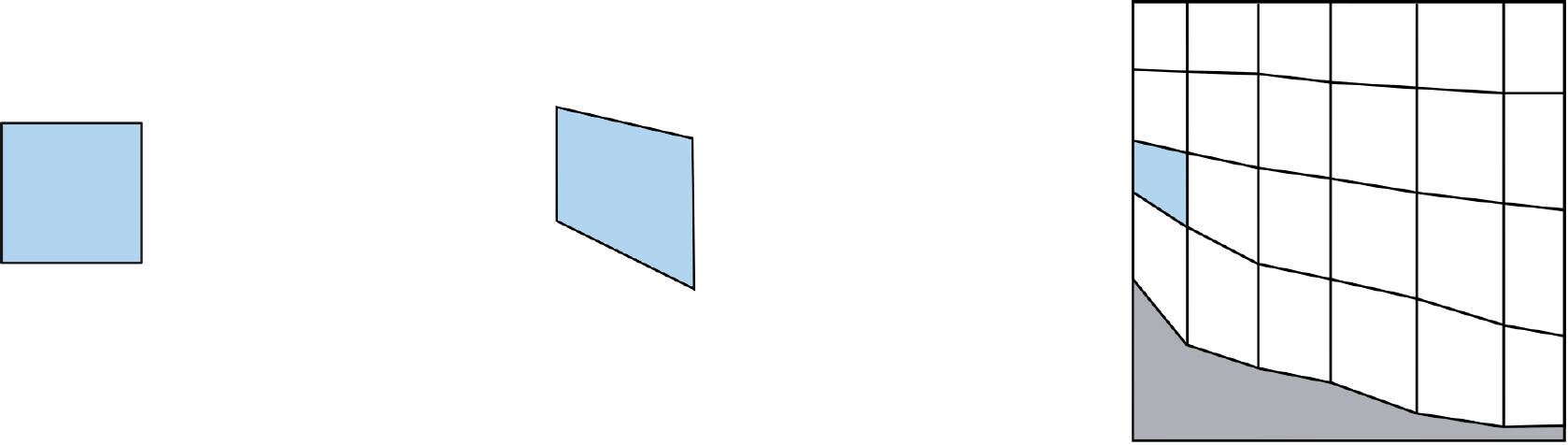}};
      \draw [thick, ->] (4.9,1.5) to [out=-20,in=200] (7,1.5);
       \draw [thick, ->] (1.2,1.5) to [out=-20,in=200] (3.3,1.5);
       \draw [thick, ->] (1.2,2)to [out=25,in=155] (7,2);
       \draw [thick] (0.05,1)to [out=-70,in=110] (0.475,0.9);
       \draw [thick] (0.475,0.9) to [out=70,in=-110] (0.9,1)  ;
       \draw [thick] (-0.18,1.23)to [out=+160,in=-20] (-0.28,1.65);
       \draw [thick] (-0.28,1.65)to [out=20,in=-160] (-0.18,2.07);
     \node at (0.475,0.55) {$\hat{T}$};
     \node at (-0.65,1.65) {$\hat{S}$};
     \node at (6,0.9) {$A_K$};
     \node at (2.2,0.9) {$D_K$};
     \node at (4.1,3) {$F_K$};
\end{tikzpicture}\caption{A two-dimensional mesh generated by multilinear transformations invariant on $\hat{T}$ of a reference element (on the left).}\label{fig:scheme1}
\end{figure}

\begin{theorem}\label{th:extrudedinv}
Consider a three-dimensional element $\hat{K}=\hat{T}\times\hat{S}$ obtained as tensor product of a two-dimensional simplex $\hat{T}$ and  a one-dimensional simplex $\hat{S}$.
Let $F_K$ be a multilinear map invariant on $\hat{T}$, $\mu = \min(k,1) $ and $\mc E_{2r+\mu+k,r+\mu}V^k(\hat{K})\subset V^k_h(\hat{K})$; or let $F_K$ be a multilinear map invariant on $\hat{S}$, $\nu = \min(k,2) $ and $\mc E_{2(r+\nu),r+k}V^k(\hat{K})\subset V^k_h(\hat{K})$. Then $\mc P_rV^k(K)\subset V^k_h(K)$ and Equations~\eqref{eq:locest} and \eqref{eq:gloest} hold.
\end{theorem}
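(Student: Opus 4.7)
The plan is to follow the general template of the proof of Theorem~\ref{th:extruded} (which is itself a restatement of Theorem 6.1 of \cite{Arnold14}), while exploiting the factorisation $F_K=A_K\circ D_K$ to obtain sharper bounds on the polynomial degrees produced under pullback. Since $F_K^{-1*}=A_K^{-1*}\circ D_K^{-1*}$ and the affine pullback $A_K^*$ preserves polynomial spaces, it suffices to show that the pullback by $D_K^{-1}$ sends $\mc P_r V^k(\tilde{K})$ into the required tensor-product space on $\hat K$; once the polynomial inclusion $\mc P_r V^k(K)\subset V^k_h(K)$ is established, Equations~\eqref{eq:locest} and \eqref{eq:gloest} follow by Theorem~\ref{th:bound} and the Cl\'ement interpolation argument recalled in Section~\ref{sec:curvielem}.

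First I would write $DD_K$ explicitly in each of the two cases. In the $\hat{T}$-invariant case $D_K(\hat x,\hat z)=(\hat x,M_S(\hat x,\hat z))$, so $DD_K$ is lower block-triangular with the identity on the $\hat T$ block; therefore $\det(DD_K)=\partial_{\hat z}M_S$, and the entries of $\det(DD_K)(DD_K)^{-1}$ and of $(DD_K)^T$ are polynomials whose bi-degrees in $(\hat T,\hat S)$ are controlled by the fact that $M_S$ is linear in each of $\hat x_1,\hat x_2$ and $\hat z$ separately. A mirror computation, with the identity block relocated to the $\hat S$ factor, treats the $\hat S$-invariant case. The essential gain over the generic multilinear situation of Theorem~\ref{th:extruded} is that $u\circ D_K$ now inflates the polynomial degree only in the direction in which $D_K$ genuinely acts, because the coordinates on $\hat T$ (respectively $\hat S$) are not altered at all.

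With these explicit forms at hand, the rest of the argument is bookkeeping. For each $k\in\{0,1,2,3\}$ I would take the appropriate Piola factor from Section~\ref{sec:3d} (namely $1$ for $k=0$, $(DD_K)^T$ for $k=1$, $\det(DD_K)(DD_K)^{-1}$ for $k=2$, and $\det(DD_K)$ for $k=3$), multiply it against $u\circ D_K$ for a generic $u\in\mc P_rV^k(\tilde K)$, split the resulting vector into its horizontal and $\hat z$-components, and verify that each component lies in the corresponding tensor-product factor of $\mc E_{2r+\mu+k,r+\mu}V^k(\hat K)$ or $\mc E_{2(r+\nu),r+k}V^k(\hat K)$. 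The numerical values $\mu=\min(k,1)$ and $\nu=\min(k,2)$ should emerge naturally as the maximal extra horizontal (respectively vertical) bi-degree that any entry of the relevant Piola factor can attain in the two invariant settings.

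The hard part will be this component-by-component verification for the intermediate cases $k=1$ and $k=2$: both the Piola factor and $u\circ D_K$ are vector-valued with mixed bi-degree, and one must respect the block decomposition used in the definitions of $\mc E_{r,s}V^1$ and $\mc E_{r,s}V^2$ from Section~\ref{sec:tfamilies}. For instance, the horizontal part of $\mc E_{r,s}V^2$ sits in $BDM_{r-1}(\hat T)\otimes DG_{s-1}(\hat S)$, so the vector field produced by the pullback must be checked not only to have the correct total bi-degree but also to respect the $H(\ddiv)$ structure on $\hat T$ encoded by $BDM$; a similar care is required for the $H(\mr{curl})$-like horizontal part of $\mc E_{r,s}V^1$. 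Once these inclusions are verified for every $k$, the estimate for Equation~\eqref{eq:best} follows at once from Equation~\eqref{eq:gloest}, completing the proof.
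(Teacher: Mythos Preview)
The paper does not supply a proof of this theorem: it simply states it as an extension of Theorem~6.1 in \cite{Arnold14} to the tensor-product prism spaces, and no proof environment follows the statement. Your plan is exactly the computation that this reference is pointing to, so in that sense your approach and the paper's are the same.

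Two points deserve correction, however. First, a notational slip: you write ``pullback by $D_K^{-1}$'' when you mean $D_K^*$ (pullback by $D_K$), since you want to transport polynomials on $\tilde K$ back to $\hat K$; the factorisation $F_K^*=D_K^*\circ A_K^*$ is what reduces the problem to the $D_K$ step.

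Second, and more substantively, your heuristic explanation of the gain is incorrect. You write that ``$u\circ D_K$ now inflates the polynomial degree only in the direction in which $D_K$ genuinely acts, because the coordinates on $\hat T$ are not altered at all''. This is false: in the $\hat T$-invariant case $D_K(\hat x,\hat z)=(\hat x,M_S(\hat x,\hat z))$, so for $u=z^r$ one gets $u\circ D_K=M_S(\hat x,\hat z)^r$, which has degree up to $2r$ in the $\hat T$-variables, exactly as in the generic multilinear case. This is why the conditions in Theorems~\ref{th:extruded} and \ref{th:extrudedinv} coincide for $k=0,1$, as the paper explicitly remarks just after the statement. The improvement for $k\geq 2$ comes \emph{entirely} from the simpler Piola factors you already identified: the block-triangular Jacobian gives $\det(DD_K)=\partial_{\hat z}M_S$, which is constant in $\hat z$ and at most quadratic in $\hat x$, and the cofactor matrix has a correspondingly restricted bi-degree. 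When you carry out the component-by-component verification you outline, keep the $u\circ D_K$ bounds at their generic multilinear values and track the savings only through the Piola factors; the indices $2r+\mu+k$ and $r+\mu$ will then fall out correctly.
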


As one would expect, when $F_K$ is a multilinear map invariant on $\hat{T}$ (or $\hat{S}$) we obtain sufficient conditions which are  considerably less strict than the ones relative to general multlinear maps. However, this is true only when $k>1$ (or $k>2$ respectively) as the conditions that apply for lower values of $k$ are the same in both cases.

To summarise the results of this section, we can state that the spaces generated using the function spaces $V^k_h(K)= F_K^{-1*}(\mc E_{r,s} V^k(\hat{K}))$ yield the convergence rates shown in Table~\ref{tab:convrat}. Note that multilinear transformations invariant on the one-dimensional simplex $\hat{T}$ are precisely the type of transformations that occur in atmosphere simulations when dealing with meshes following a varying topography.
The convergence rates relative to the spaces generated using the function spaces $V^k_h(K)= F_K^{-1*}(\mc E_{r,s}^- V^k(\hat{K}))$ are different, but can be derived by determining the smallest polynomial degrees $p$ and $q$ such that $\mc E_{r,s}^- V^k(\hat{K})\subseteq \mc E_{p,q} V^k(\hat{K})$, and then using the results above.

\begin{table}\begin{center}
\renewcommand{\arraystretch}{1.2} 
\begin{tabular}{c|llll}
\hline\hline
  \multicolumn{1}{c|}{\multirow{2}{*}{$\mc E_{r,s}V^k(\mc T_h)$}} & \multicolumn{1}{c}{\multirow{2}{*}{Affine}} & \multicolumn{1}{c}{\multirow{2}{*}{Multilinear}} & \multicolumn{1}{c}{Multilinear} \\
  & & & \multicolumn{1}{c}{invariant on $\hat{T}$}\\ 
\hline
$k=0$  & ${\mr{min}}(r,s)+1$  & ${\mr{min}} (\lfloor r/2 \rfloor,s)+1$    &  ${\mr{min}} (\lfloor r/2 \rfloor,s)+1$  \\[2pt]
$k=1$  & ${\mr{min}}(r,s)$  & ${\mr{min}} (\lfloor r/2 \rfloor,s)$      &  ${\mr{min}} (\lfloor r/2 \rfloor,s)$  \\[2pt]
$k=2$  & ${\mr{min}}(r-1,s)$  & ${\mr{min}} (\lfloor r/2 \rfloor,s)-1$     &  ${\mr{min}} (\lfloor (r-1)/2 \rfloor,s)$  \\[2pt]
$k=3$  & ${\mr{min}}(r-1,s)$  & ${\mr{min}} (\lfloor r/2 \rfloor,s)-2$   &   ${\mr{min}} (\lfloor (r-2)/2 \rfloor,s)$   \\[2pt]
\hline
\end{tabular}
\caption{Convergence rates for finite element spaces obtained by pull-back of $\mc E_{r,s}V^k(\hat{T}\times \hat{S})$ with $\mr{dim}(\hat{T}) =2$ and $\mr{dim}(\hat{S}) =1$ (triangular prism elements), using different classes of transformations $F_K$. The symbols $\lfloor x \rfloor$ denotes the largest integer smaller or equal to $x$.}
\label{tab:convrat}
\end{center}\end{table}

\subsection{Approximation properties for global transformations}
\label{sec:glotran}

The last subsection was devoted to the study of the approximation properties of finite element spaces defined on meshes generated by arbitrary local transformations of a reference element. In several applications, however, the mesh is generated by means of a global transformation from a reference mesh on which spaces with optimal convergence properties are defined. In these cases, if the global map is sufficiently regular, one can transfer the convergence properties of the original mesh to the deformed one.

Numerical weather prediction models require the capability to solve in
a spherical annulus domain; we are also interested in domains with
topography and terrain-following coordinates. These are not polygonal
domains, and we must model them using piecewise-polynomial
approximations $\tilde{\Omega}$. With this in mind, let
$\tilde{\Omega}$ be also an $n$-dimensional Riemmanian manifold, such
that there exists a $C^1$ diffeomorphism $F:\Omega\rightarrow
\tilde{\Omega}$. We want to show how elements of the spaces $HV^k(\Omega)$ can
be transformed by $F$ into elements of $HV^k(\tilde{\Omega})$. It is
understood that simple composition with $F^{-1}$ is not enough for
this purpose. A well-known example for this is the case of the space
$H(\mr{div};\Omega)$, where the correct transformation is given by the
contravariant Piola transform which defines an isomorphism between
$H(\mr{div};\Omega)$ and $H(\mr{div};\tilde{\Omega})$.

It is easy to check that one needs to define appropriate
transformations, analogous to the Piola transform, for each space
$HV^k(\Omega)$. We denote such transformations with a unique
symbol, $F^*: HV^k(\tilde{\Omega})\rightarrow HV^k(\Omega)$, namely
the pullback of $F$, with inverse $F^{-1*}: HV^k(\Omega)\rightarrow HV^k(\tilde{\Omega})$, which defines an isomorphism between
$HV^k(\Omega)$ and $HV^k(\tilde{\Omega})$. $F^{-1*}$ coincides with the contravariant Piola transform when applied to $HV^{n-1}(\Omega)$, and takes a different
form depending on the degree $k$ in the complex and the dimension of
the domain $n$. The action of $F^{-1*}$ on an element $v \in HV^k(\Omega)$
is defined in Table~\ref{tab:Fstar}.  Such definitions allow to
preserve the Hilbert complex structure, \emph{i.e.}\ the following diagram
commutes 
\begin{table}\begin{center}
\setlength{\tabcolsep}{8pt}
\begin{tabular}{l|llll}
\hline\hline\\[-10pt]
   & $k=0$ & $k=1$ & $k=2$ & $k=3$ \\
   \hline\\[-10pt]
$n=1$  &  $v \circ F^{-1}$  & $v/\mr{det}(J) \circ F^{-1}$     \\[2pt]
$n=2$ & $v \circ F^{-1}$ & $J\, v / \mr{det}(J)  \circ F^{-1}$     &  $v/\mr{det}(J) \circ F^{-1}$  \\[2pt]
$n=3$  & $v \circ F^{-1}$  & $J^{-T}\,v \circ F^{-1}$   &   $J\, v / \mr{det}(J)  \circ F^{-1}$  & $v/\mr{det}(J) \circ F^{-1}$ \\[2pt]
\hline
\end{tabular}
\caption{$F^{-1*}v$ for $1\leq n \leq 3$ and $0\leq k \leq 3$. $J$ is the Jacobian matrix $DF$ of the map $F$.}
\label{tab:Fstar}
\end{center}\end{table}

\begin{equation}\label{eq:pulcom}
\begin{tikzcd}[column sep=2em]
0\arrow{r}&
HV^0(\Omega)\arrow{r}{\ed^0}\arrow{d}{F^{-1*}} &
HV^1(\Omega)\arrow{r}{\ed^1}\arrow{d}{F^{-1*}}&
\ldots \arrow{r}{\ed^{n-1}}&
HV^n(\Omega) \arrow{r}\arrow{d}{F^{-1*}}&
0 \\
0 \arrow{r}&
HV^0(\tilde{\Omega}) \arrow{r}{\ed^0}&
HV^1(\tilde{\Omega}) \arrow{r}{\ed^1}&
\ldots\arrow{r}{\ed^{n-1}}&
HV^n(\tilde{\Omega}) \arrow{r}& 0
\end{tikzcd}
\end{equation}
Note that in the diagram above the differential operators are expressed accordingly to the metric defined on $\Omega$ or $\tilde{\Omega}$.

Consider two $n$-dimensional manifolds $\tilde{\Omega}$ and $\Omega$ and a decomposition ${\mc T}_{\tilde{h}}$ of $\tilde{\Omega}$ by either simplices or tensor product elements. In other words, each element $\tilde{K}$ in ${\mc T}_{\tilde{h}}$ is generated via a sufficiently regular transformation $F_{\tilde{K}}:\hat{K}\rightarrow \tilde{K}$ as in Section~\ref{sec:curvielem}, where $\hat{K}$ is a reference element. Let $G$ be a diffeomorphism from $\tilde{\Omega}$ to $\Omega$, and let $G_K:\tilde{K}\rightarrow K$ be the restriction of $G$ to $\tilde{K}$. This defines a decomposition $\mc T_h$ of $\Omega$ in which each element $K$ is given by a transformation  of the reference element $\hat{K}$ defined by $F_K:= G_K \circ F_{\tilde{K}}$. Then one can define a compatible function spaces on each $K$ as pull-back of the ones defined on $\hat{K}$. In particular, given a space $V^k(\hat{K})$ on the reference element  $\hat{K}$, the corresponding space on $K$ is given by $V^k(K):=G^{-1*}_{K}F^{-1*}_{\tilde{K}} V^k(\hat{K})$.

Let $V^k_{\tilde{h}}(\tilde{\Omega})$ and $V^k_h(\Omega)$ be the resulting finite element spaces on $\tilde{\Omega}$ and $\Omega$ respectively.
The definition of the space $V^k_h(\Omega)$ makes sense only if $V^k_h(\Omega)\subset HV^k(\Omega)$. For this to hold, it is sufficient that $G$ is $C^0$ at the element interfaces and $G^{-1*}_K$ is bounded inside each element. These conditions, however, are not sufficient for optimal convergence. The following theorem establishes the level of regularity of $G$ required to guarantee optimal convergence rates for the relative finite element spaces on $\Omega$.

\begin{theorem}\label{th:gtransf}
Assume that $\mc P_r V^k(\tilde{K})\subset V^k_h(\tilde{K})$ and let $G:\tilde{\Omega}\rightarrow\Omega$ be a $C^0$ diffeomorphism. Moreover, assume that for any $\tilde{K}\in {\mc{T}}_{\tilde{h}}$ the restriction $G_{K}:\tilde{K} \rightarrow K$ is a $C^{r+2}$ diffeomorphism, such that
\begin{equation}\label{eq:constm}
\max_{1\leq s\leq r+2} |G_K|_{W^s_\infty(\tilde{\Omega})} \leq M , \quad
|G_K^{-1}|_{W^1_\infty(\tilde{\Omega})}\leq M,
\end{equation}
for $M>0$. Then, for $(n-k)/2<s\leq r+1$,
\begin{equation}\label{eq:finest}
\inf_{p\in V^k_h(\Omega)} \| u-p \|_{L^2(\Omega)} \leq C \tilde{h}^{s} \| u \|_{H^{s}V^k(\Omega)}, \quad u \in H^sV^k(\Omega),
\end{equation}
where $C>0$ and $\tilde{h}$ is the largest diameter of the elements in the decomposition ${\mc T}_{\tilde{h}}$ of $\tilde{\Omega}$.
\end{theorem}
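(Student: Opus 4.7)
The plan is to transplant the problem on $\Omega$ to the reference manifold $\tilde{\Omega}$, where the hypothesis $\mc P_rV^k(\tilde K)\subset V^k_h(\tilde K)$ already delivers a Cl\'ement-type approximation estimate of the form \eqref{eq:gloest}, and then pull the approximant back through $G$. Concretely, given $u\in H^sV^k(\Omega)$ with $(n-k)/2<s\leq r+1$, I set $\tilde u:=G^* u \in H^sV^k(\tilde{\Omega})$, choose an interpolant $\tilde p\in V^k_{\tilde h}(\tilde{\Omega})$ of $\tilde u$, and define $p:=G^{-1*}\tilde p$. By construction of the spaces on $\mc T_h$ as $V^k_h(K)=G^{-1*}_K V^k_{\tilde h}(\tilde K)$, the function $p$ lies in $V^k_h(\Omega)$, and the global $C^0$ regularity of $G$ together with the commuting diagram \eqref{eq:pulcom} guarantees the correct inter-element continuity so that $p\in HV^k(\Omega)$. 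The remaining task is to bound $\|u-p\|_{L^2(\Omega)}$ by the claimed quantity.

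For the forward transfer I apply Theorem~\ref{th:bound} element-by-element to each $G_K$; the $C^{r+2}$ and $W^{r+2}_\infty$ control imposed by \eqref{eq:constm} is precisely what is required to cover every admissible exponent $s\leq r+1$. Summing squares over $\tilde K\in\mc T_{\tilde h}$ yields the global pullback bound
\begin{equation*}
\|\tilde u\|_{H^sV^k(\tilde{\Omega})} \leq C \|u\|_{H^sV^k(\Omega)},
\end{equation*}
with $C$ depending only on $M$, $r$ and $n$. The Cl\'ement interpolation argument of Section~\ref{sec:curvielem}, applicable thanks to the inclusion $\mc P_rV^k(\tilde K)\subset V^k_h(\tilde K)$ and the admissible range of $s$, then produces $\tilde p$ satisfying
\begin{equation*}
\|\tilde u-\tilde p\|_{L^2(\tilde{\Omega})}\leq C\tilde h^{s}|\tilde u|_{H^sV^k(\tilde{\Omega})}.
\end{equation*}

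To transfer the $L^2$ error back to $\Omega$, I use the explicit pullback formulas in Table~\ref{tab:Fstar}: on each element the action of $G^{-1*}_K$ on an $L^2$ function amounts to composition with $G^{-1}_K$ together with multiplication by a bounded combination of $J$, $J^{-T}$ and $\det(J)^{\pm 1}$. The $W^1_\infty$ control on $G_K$ and $G_K^{-1}$ in \eqref{eq:constm} uniformly bounds this prefactor as well as the Jacobian of the associated change of variables, yielding $\|G^{-1*}_K v\|_{L^2(K)}\leq C\|v\|_{L^2(\tilde K)}$ with $C$ depending only on $M$ and $n$. Taking $v=\tilde u-\tilde p$, summing over elements, and chaining with the two previous estimates, delivers \eqref{eq:finest}.

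The main obstacle is the careful accounting of regularity. To reach the full exponent $s=r+1$ in \eqref{eq:finest} one must invoke Theorem~\ref{th:bound} at the level $H^{r+1}$, which in turn demands a $C^{r+2}$ diffeomorphism with $W^{r+2}_\infty$ control — this is precisely the content of \eqref{eq:constm}, and explains why the required regularity of each $G_K$ is one derivative higher than a naive count would suggest. A secondary subtlety is that only $C^0$ regularity of $G$ is assumed globally: every higher-regularity estimate takes place strictly element-by-element on $\mc T_{\tilde h}$, and the mere continuity at element interfaces together with \eqref{eq:pulcom} is what ensures that the assembled function $p$ actually lies in $V^k_h(\Omega)\subset HV^k(\Omega)$.
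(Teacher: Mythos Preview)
Your argument is correct and follows essentially the same route as the paper: pull $u$ back to $\tilde{\Omega}$ via $G^*$, apply the Cl\'ement estimate \eqref{eq:gloest} on $\tilde{\Omega}$ using the hypothesis $\mc P_rV^k(\tilde K)\subset V^k_h(\tilde K)$, and push the $L^2$ error forward via $G^{-1*}$, with Theorem~\ref{th:bound} (together with the $C^{r+2}$/$W^{r+2}_\infty$ bounds in \eqref{eq:constm}) controlling both transfers. The only cosmetic difference is that the paper invokes Theorem~\ref{th:bound} again (at level $r=0$) for the backward $L^2$ bound, whereas you argue directly from the explicit pullback formulas in Table~\ref{tab:Fstar}; these are equivalent.
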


\begin{proof}
By definition any function $p\in V^k_h(\Omega)$ can be expressed formally as $p= G^{-1*} \tilde{p}$ with $\tilde{p}\in V^k_{\tilde{h}}(\tilde{ \Omega})$, with the pull-back computed element-wise. Therefore,
\begin{equation}\label{eq:th1}
\inf_{p\in V^k_h(\Omega)} \| u-p \|_{L^2(\Omega)}
=\inf_{\tilde{p}\in V^k_{\tilde{h}}(\tilde{ \Omega})} \|G^{-1*}( G^{*} u-\tilde{p}) \|_{L^2(\Omega)}.
\end{equation}
Using Theorem~\ref{th:bound} and Equation~\eqref{eq:gloest}, we obtain the following bound,
\begin{equation}\label{eq:th2}
\begin{split}
\inf_{\tilde{p}\in V^k_{\tilde{h}}(\tilde{ \Omega})} \|G^{-1*}( G^{*} u-\tilde{p}) \|_{L^2({\Omega})}&\leq 
C \inf_{\tilde{p}\in V^k_{\tilde{h}}(\tilde{ \Omega})} \| G^{*} u-\tilde{p}\|_{L^2(\tilde{\Omega})}\\
&\leq   C \tilde{h}^s\,\sum_{\tilde{K}\in \mc T_{\tilde{h}}} | G_K^{*} u|_{H^{s}V^k(\tilde{K})}\, ,
\end{split}
\end{equation}
for $(n-k)/2<s\leq r+1$. Applying again Theorem~\ref{th:bound} to the right hand side of Equation~\eqref{eq:th2}, we obtain
\begin{equation}\label{eq:th3}
| G^{*}_K u|_{H^{s}V^k(\tilde{K})}
\leq \| G_K^{*} u\|_{H^{s}V^k(\tilde{K})}
\leq C \| u\|_{H^{s}V^k({K})}.
\end{equation}
The proof is completed by combining Equations~\eqref{eq:th1}, \eqref{eq:th2} and \eqref{eq:th3}.
\end{proof}


The estimates derived in this section might seem to have little utility given that functions in $V^k_h(K)$ cannot be computed accurately for arbitrary diffeomorphisms $G$. Nonetheless, Theorem~\ref{th:bound} finds an important application in the context of problems defined on surfaces. To see this, let $\Omega$ be a $n$-dimensional manifold embedded in $\mathbb{R}^{n+1}$. Consider a family of decompositions of $\Omega$ by piecewise linear elements parameterized by the maximum element diameter $\tilde{h}$. Such meshes implicitly define a family of approximate domains $\tilde{\Omega}_{\tilde{h}}$, on which it is possible to define as finite element space $V^k_{\tilde{h}}(\tilde{\Omega}_{\tilde{h}})$ any of the polynomial spaces considered above. Under some regularity assumption on $\Omega$ \cite{Holst12} one can also construct a family of diffeomorphisms $G_{\tilde{h}}:\tilde{\Omega}_{\tilde{h}}\rightarrow \Omega$, which by construction can only be $C^0$ at the element interfaces and that approach the identity map as 
$\tilde{h} \rightarrow 0$. 
The pull-backs $G_{\tilde{h}}^{-1*}$ can be used to construct finite element spaces $V^k_h(\Omega)$. Hence, a natural measure of the error between $G_{\tilde{h}}$ and the identity map on $\Omega$ is given by 
\begin{equation}\label{eq:errid}
\|\mathrm{adj}(G_{\tilde{h}}^{-1*}) \circ G_{\tilde{h}}^{-1*} - \mr{Id} \| \, ,
\end{equation}
where $\mr{adj}$ denotes the adjoint with respect to the $L^2$ inner product, $\mr{Id}$ is the identity map and $\|\cdot\|$ is the standard operator norm on the space of $L^2$ functions or vector fields, accordingly to the degree $k$.

The exact same setting for this problem and its connection with the FEEC are explored in \cite{Holst12} for the solution of elliptic problems. Their main conclusion is that even though one uses only functions in $V^k_{\tilde{h}}(\tilde{\Omega}_{\tilde{h}})$ as computational basis functions, the final error estimates depend on the quantity in Equation~\eqref{eq:errid} and on the best approximation error on the space $V^k_h(\Omega)$, as measured by 
\begin{equation}
\inf_{p\in V^k_h(\Omega)} \| u-p \|_{L^2(\Omega)}.
\end{equation}
Therefore, if Theorem~\ref{th:gtransf} holds, with the constant $M$ in Equation~\eqref{eq:constm} being independent of $\tilde{h}$, one can apply the results in \cite{Holst12} and obtain optimal convergence rates for the problem defined on the original surface $\Omega$.

\subsection{Finite element spaces in ``atmosphere-shaped'' domains}
\label{sec:shell}

In this section we consider as an example finite element spaces defined on a spherical shell in $\mathbb{R}^3$, which is a typical scenario for atmosphere simulations, and we compute their approximation properties. The spherical shell $C$ is the submanifold of $\mathbb{R}^3$ defined by
\begin{equation}
C:=\{ \mb x\in \mathbb{R}^3 \,|\, a \leq\| \mb x\|\leq b \}\, ,
\end{equation}
for any $b>a>0$. The shell $C$ is topologically the same as the generalised cylinder $\tilde{C} = S\times [a,b]\subset \mathbb{R}^4$, where $S$ is the sphere of radius $a$,
\begin{equation}
S:=\{ \mb x\in \mathbb{R}^3 \,|\, \| \mb x\|=a \}.
\end{equation}
In fact, we can construct a continuous transformation $F:\mathbb{R}^4\rightarrow\mathbb{R}^3$  such that the restriction $F|_{\tilde{C}}:\tilde{C} \rightarrow C$ is a $C^\infty$ diffeomorphism. A particular choice for $F$ is the multilinear map defined by
\begin{equation}
F(x_1,x_2,x_3,x_4) = (x_1,x_2,x_3) \left(1+\frac{x_4-a}{a}\right),
\end{equation}
where $\{x_i\}_{i=1}^3$ and $\{x_i\}_{i=1}^4$ are Cartesian coordinates in $\mathbb{R}^3$ and $\mathbb{R}^4$, respectively.

Given a decomposition of $[a,b]$ and one for $S$, one can construct a decomposition $\mc T_{\tilde{h}}$ for $\tilde{C}$ by tensor product, meaning that each element in $\mc T_{\tilde{h}}$ is constructed as tensor product element of a simplex in each mesh. The decomposition $\mc T_{\tilde{h}}$ implicitly defines the approximate cylinder $\tilde{C}_{\tilde{h}}$. The map $F$ restricted to $\tilde{C}_{\tilde{h}}$ defines a decomposition $\mc T_h$ for $C$ and the approximate shell $C_{h}$ as image of $F$ restricted to $\tilde{C}_{\tilde{h}}$. Any element in $K\in\mc T_h$ is constructed using a multilinear map $F|_{\tilde{K}}:\tilde{K}\rightarrow K$ with $\tilde{K}\in \mc T_{\tilde{h}}$.

Since the decomposition $\mc T_{\tilde{h}}$ is composed by tensor product elements we can define a finite element space $V^k_{\tilde{h}} (\tilde{C}_{\tilde{h}})$ on $\tilde{C}_{\tilde{h}}$ belonging to either one of the families $\mc E_{r,s}V^k$ or $\mc E^-_{r,s}V^k$. The map $F$ induces via pull-back a definition for a finite element space $V^k_{{h}} ({C}_{{h}})$ on ${C}_{{h}}$. Since the derivatives of the map $F|_{\tilde{K}}$  are bounded away from zero by a constant independent of the size of $\tilde{K}$, we can apply Theorem~\ref{th:extruded} and the classical Cl\'ement interpolant construction to obtain a first estimate for the approximation properties of the space $V^k_h(C)$. In this case, however, such estimate is not sharp. As a matter of fact, since $F|_{\tilde{C}_{\tilde{h}}}$ is certainly $C^0$ at the element interfaces Theorem~\ref{th:gtransf} also applies providing optimal convergence rates for the space $V^k_h(C_h)$ as inherited by the ones of   $V^k_{\tilde{h}}(\tilde{C}_{\tilde{h}})$.

As explained in the last section, the error estimates for $V^k_{h}(C_h)$ can be transferred to a finite element space defined on $C$, if we are able to define a diffeomorphism from $C_h$ to $C$ that satisfies the requirements of Theorem~\ref{th:gtransf}. 
We now show how this can be done.
Using the techniques described in \cite{Holst12}, one can construct a diffeomorphism $G_{\tilde{h}}: \tilde{C}_{\tilde{h}} \rightarrow \tilde{C}$ that converges to the identity map on $\tilde{C}$ as $\tilde{h}\rightarrow 0$. The map $G_{\tilde{h}}$ can be lifted via the map $F$ to define the diffeomorphism $G_{h}:C_{h} \rightarrow C$, \ie
\begin{equation}\label{eq:mapc}
G_h:= F|_{\tilde{C}}\circ G_{\tilde{h}} \circ  (F|_{\tilde{C}_{\tilde{h}}})^{-1}.
\end{equation}
In Figure~\ref{fig:scheme} a graphical representation of the maps is presented in the two-dimensional case. Because of the smoothness of $F$, if $G_{\tilde{h}}$ satisfies the hypothesis of Theorem~\ref{th:gtransf} so does $G_h$. Therefore, the optimal convergence rates can be extended to the finite element spaces defined on $C$ via $G_h$. As explained in Section~\ref{sec:glotran}, in order for these estimates to be useful $G_h$ needs to approach the identity as $\tilde{h}\rightarrow 0$, in the sense of Equation~\eqref{eq:errid}. Formally, this can be verified directly by letting $\tilde{h}\rightarrow 0$ in Equation~\eqref{eq:mapc}.

It is worth observing that given a  differential problem on $C$, high order convergence can be achieved just as long as $G_h$ approaches the identity sufficiently fast with mesh refinement. In other words, if the error term in Equation~\eqref{eq:errid} rewritten for $G_h$ scales as ${h}^{s+1}$, then $s+1$ is the highest convergence rate achievable regardless of the polynomial order of the spaces on $\tilde{C}_{\tilde{h}}$. In \cite{Holst12}, it is proven that if one adopts an isoparametric approach $s$ is the order of the polynomial approximation $\tilde{C}_{\tilde{h}}$ of the surface $\tilde{C}$. Therefore, if higher convergence rates are required, one just needs to adopt a higher order approximation of  $\tilde{C}$ instead of a piecewise linear one and repeat the considerations above. 

Numerical experiments were performed to verify the expected convergence rates for both direct $L^2$-projections and the Helmholtz problem \cite{Arnold06} on the shell  as shown in Figure~\ref{fig:convtest} and~\ref{fig:convtesth} respectively.

It should be stressed that the final estimates on the curved domain rely essentially on the possibility of defining the global map $F$. Given this, proving optimal convergence rates in other cases of interest boils down to constructing such maps. For example, since in the meteorological community the use of cubed sphere meshes with quadrilateral cells is widespread, it is reasonable to ask whether such meshes enjoy optimal approximation properties when using the FEEC. This meshes are constructed using the $C^0$ map from the surface of a cube to the sphere. This map possesses enough smoothness for Theorem~\ref{th:gtransf} to apply and can be easily extended to the shell case by means of an extrusion in the fourth dimension as described above.

A similar discussion to the one of the present section can be applied to other domains which are relevant for geophysical applications. For example, one is generally interested in performing simulations in a domain that approximates the actual shape of the Earth, including terrain irregularities such as continents and mountains. In this case, instead of modifying the map $F$ one can think of defining a domain $\tilde{C}\subset\mathbb{R}^4$ such that its image $C$ under $F$ coincides with the desired Earth atmosphere approximation. If $C$ is sufficiently smooth then $\tilde{C}$ is a regular surface in $\mathbb{R}^4$ and one can repeat the considerations of this section to generate a finite element space on $C$ with optimal convergence properties.
\begin{figure}[h]
\centering
 \begin{minipage}[t]{0.9\linewidth}
\begin{tikzpicture}
\centering
    \node[anchor=south west,inner sep=0] at (0,0)
 { \includegraphics[scale=0.6]{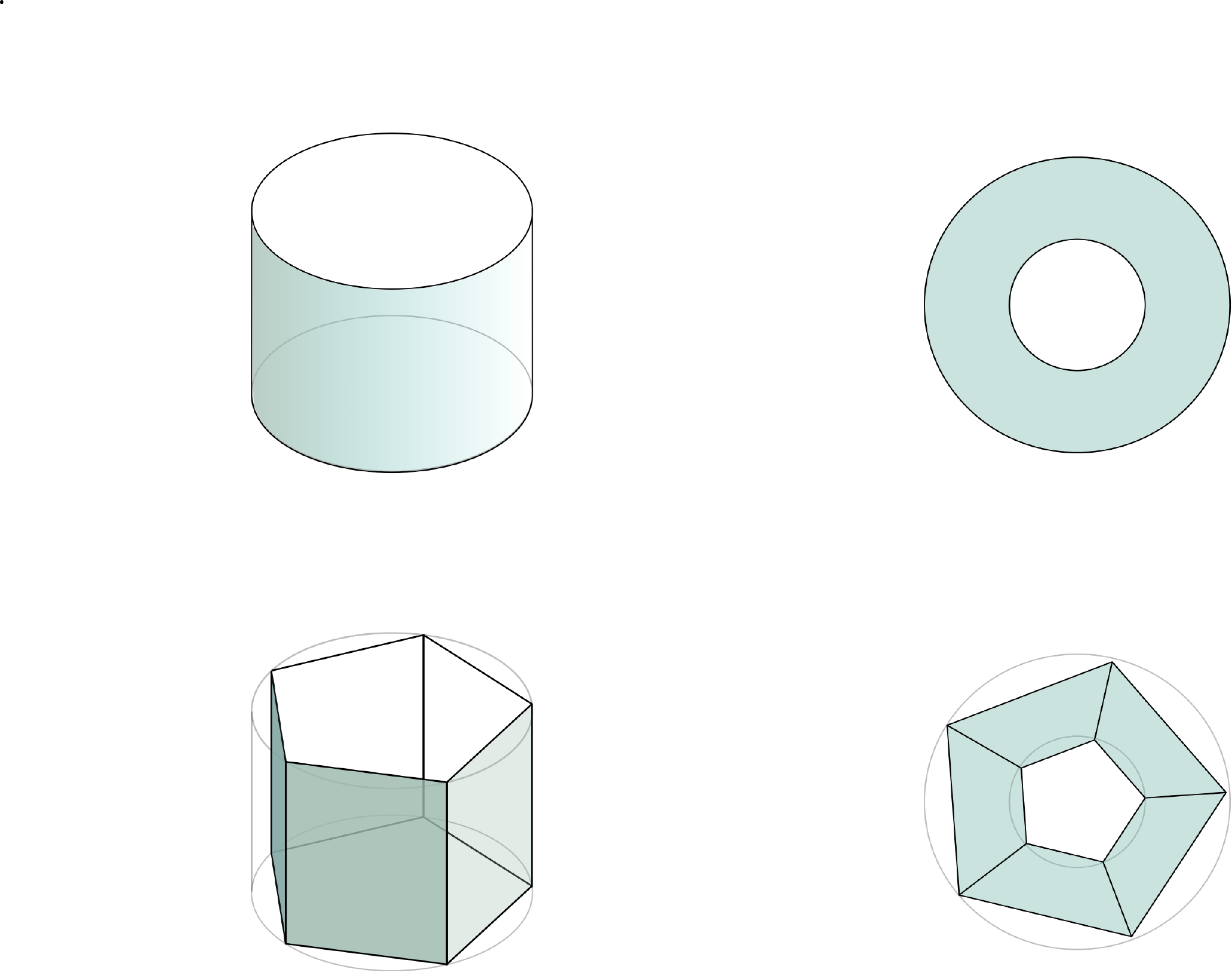}};
    \draw [ thick,->] (6,6.8) -- (9,6.8);
    \draw [thick, ->] (6,1.7) -- (9,1.7);
    \draw [thick, ->] (4.05,3.8) -- (4.05,4.8);
    \draw [thick, ->] (11.05,3.8) -- (11.05,4.8);
    \node at (7.5,7.2) {$F|_{\tilde{C}}$};
    \node at (7.5,2.1) {$F|_{\tilde{C}_{\tilde{h}}}$};
    \node at (3.55,4.2) {$G_{\tilde{h}}$};
  \node at (10.55,4.2) {$G_{h}$};
\end{tikzpicture}
\end{minipage}
\caption{Graphical representation of the maps involved in the construction of $G_h$ in the two-dimensional case.}\label{fig:scheme}
 
\end{figure}

\begin{figure}[h]
\begin{center}
\includegraphics[scale=0.6]{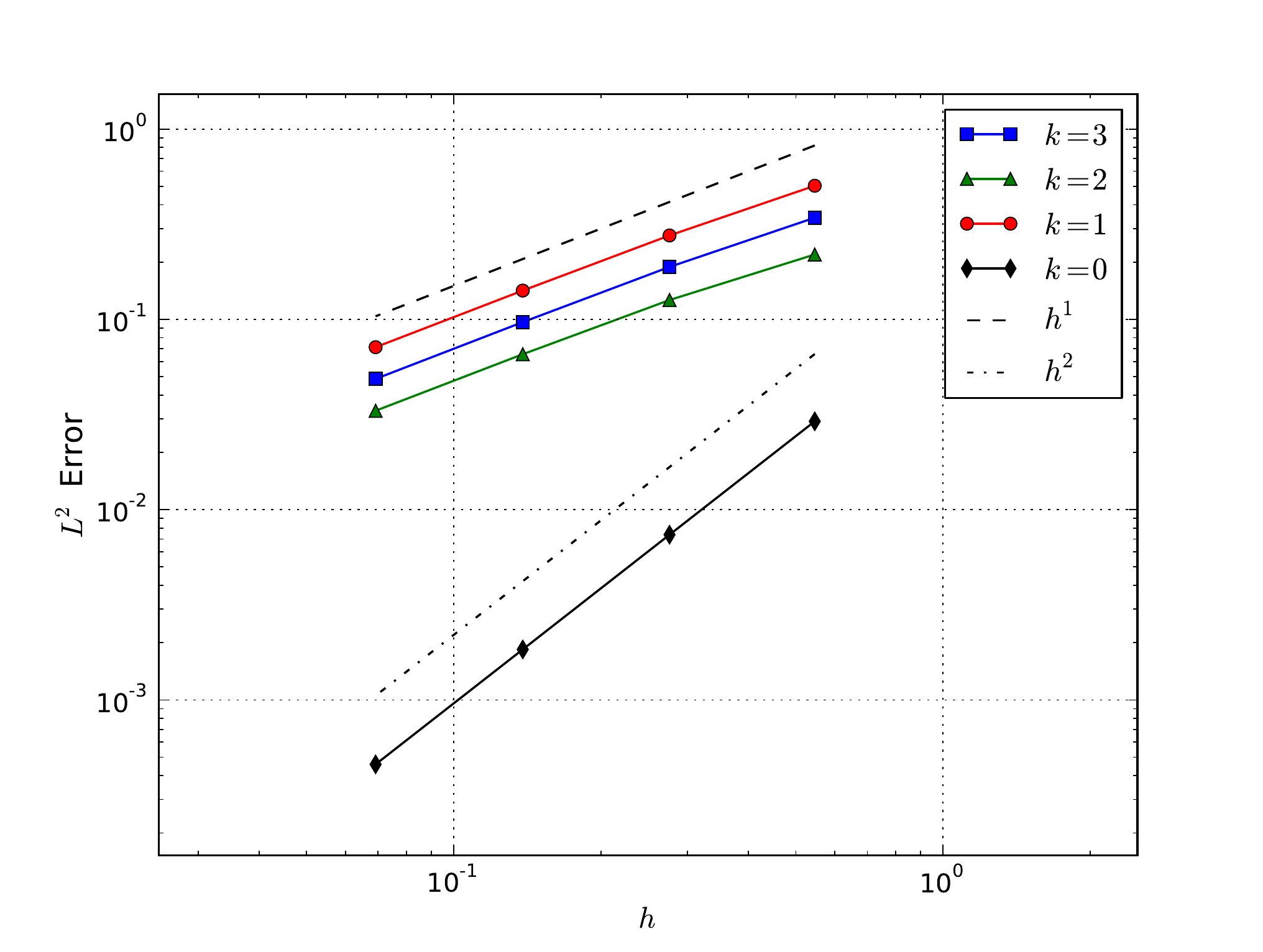}
\caption{Convergence test for the finite element spaces on $C$ defined via pull-back of $\mc E^{-}_{r,s} V^k(\tilde{C}_{\tilde{h}})$ with $r=s=1$. The map $G_h$ is computed using a piecewise linear approximation, which is sufficient to show convergence up to second order. } \label{fig:convtest}
\end{center}
\end{figure}

\begin{figure}[h]
\begin{center}
\includegraphics[scale=0.6]{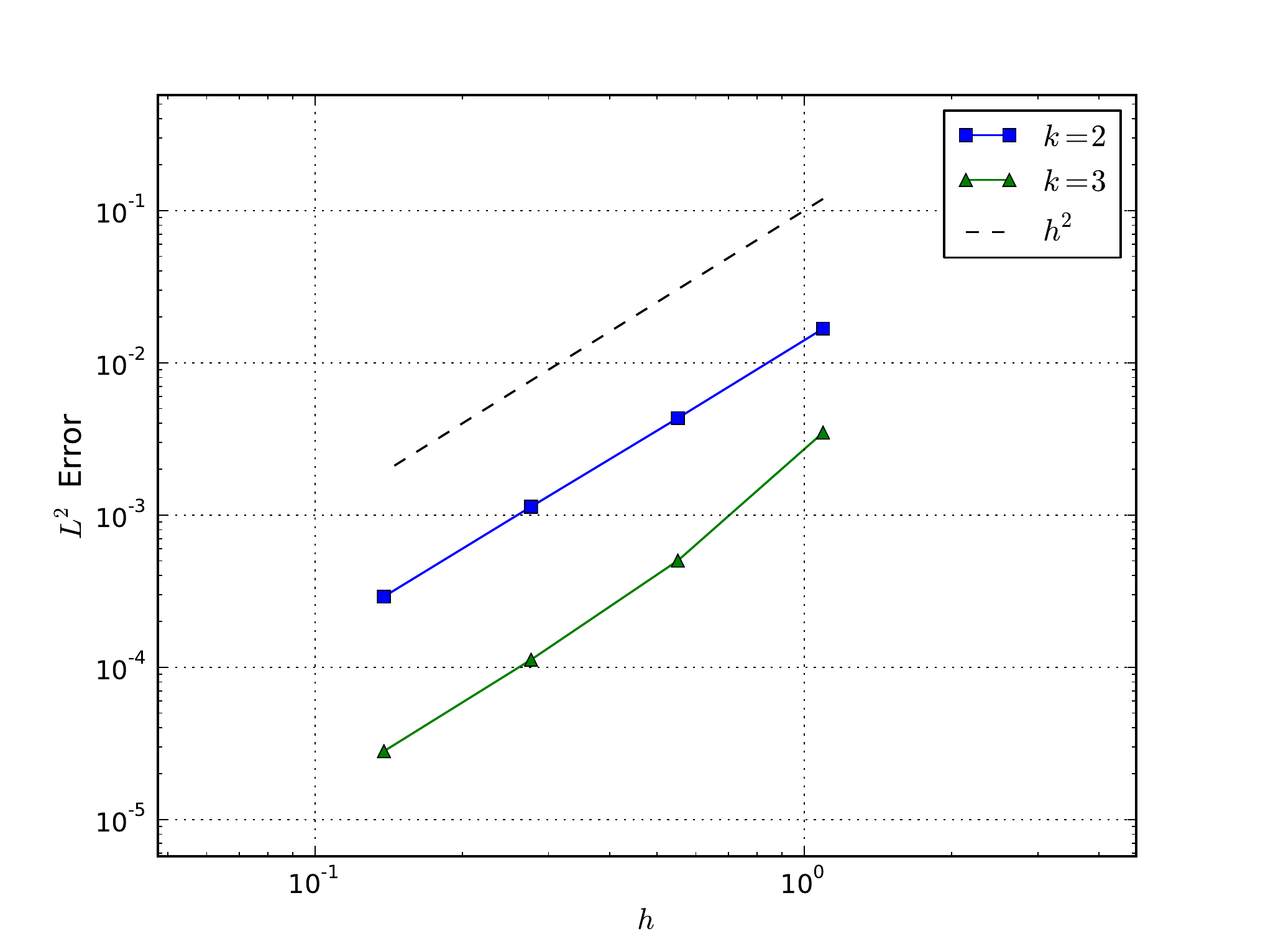}
\caption{Convergence test for them mixed formulation of the Helmholtz equation for 3-forms on $C$, with finite element spaces defined via pull-back of $\mc E^{-}_{r,s} V^k(\tilde{C}_{\tilde{h}})$ with $r=s=2$ and $k=2,3$. The map $G_h$ is computed using a piecewise linear approximation, which is sufficient to show convergence up to second order. } \label{fig:convtesth}
\end{center}
\end{figure}

In general, however, the decomposition of $C$ might be less regular, or in other words it might not be possible to reinterpret it as image of a smooth map $F$. In these cases, one can still rely on the  ``worst-case'' scenario estimates in Theorems~\ref{th:extruded} and \ref{th:extrudedinv}. Note that the results in Theorem~\ref{th:extrudedinv} are particularly relevant for mountain terrain simulation, in which case the computational mesh is usually obtained by modifying in the vertical direction only a decomposition which is otherwise regular in the sense of Theorem~\ref{th:gtransf}. Note however, that smoothing topography is a standard part of numerical weather prediction.

\section{Representation of temperature in compatible finite element spaces}
\label{sec:temp}
There was one piece of crucial information missing from the last
section on three-dimensional finite element spaces: the choice of
finite element space for potential temperature $\theta$. This is the temperature a dry fluid parcel would attain if brought adiabatically to a standard reference pressure $p_0$, typically the pressure at sea-level, i.e.\
\begin{equation}\label{eq:pottemp}
\theta = T \left(\frac{p_0}{p}\right)^{\kappa}\,,
\end{equation}
where $\kappa = R_d/c_p$, $R_d$ is the gas constant of dry air, and $c_p$ is the specific heat capacity at constant pressure.
In this
section, we introduce finite element spaces for $\theta$, and then
describe finite element discretisations for three-dimensional
dynamical cores.

In finite difference dynamical cores, there are two main options for
vertical staggering: the Lorenz grid (temperature collocated with
density), and the Charney-Phillips grid (temperature collocated with
vertical velocity).  The temperature data is always cell centred in
the horizontal.

To mimic the Lorenz grid, we can simply represent temperature in the
density space $V^3_h(\Omega)$. However, many dynamical cores prefer the
Charney-Phillips grid, since it avoids a spurious mode in the
hydrostatic balance equation. To see how to mimic the Charney-Phillips
grid in a compatible finite element discretisation, we need to examine
the structure of $V^2_h(\Omega)$ more closely.

At the level of the reference cell $\hat{K}$, the space $V^2_h(\Omega)$ can be
decomposed into a vertical part $V^{2,v}_h(\Omega)$, and a horizontal part
$V^{2,h}_h(\Omega)$. For example, for a triangular prism, we could take
$V^{2,v}_h(\Omega)=DG_0\otimes CG_1\hat{z}$, where $\hat{z}$ is the unit vector
pointing in the vertical direction. Clearly, all the elements of
$V^{2,v}_h(\Omega)$ have vanishing normal components on the side walls of
$\hat{K}$. Then, a corresponding choice for $V^{2,h}_h(\Omega)$ is $BDM_1\otimes
DG_0$. Since $BDM_1$ is defined on the horizontal triangle, all elements
of $V^{2,h}_h(\Omega)$ point in the horizontal plane and all the elements of
$V^{2,h}_h(\Omega)$ have vanishing normal components on the top and bottom of
$\hat{K}$.  After Piola transformation to a physical cell $K$ with
vertical side walls but possibly sloping top and bottom boundaries, we
see that $V^{2,v}_h(K)$ points in the vertical direction, but now
$V^{2,h}_h(K)$ has a vertical component as well. It is this
non-orthogonality for terrain following grids that leads to pressure gradient errors just as for the C-grid staggered finite difference method.

We see that $V^{2,v}_h(\hat{K})$ is vector-valued, but always points in the vertical direction. We define $V^t_h(\hat{K})$ by
\begin{equation}
  V^t_h(\hat{K}) = \left\{\theta: \theta\hat{z}\in V^{2,v}_h(\hat{K})\right\}.
\end{equation}
Then, given a mapping $F_K:\hat{K}\to K$, we define
\begin{equation}
  V^t_h({K}) = \left\{\theta: \theta\circ F_K\in V^t_h(\hat{K})\right\},
\end{equation}
\emph{i.e.}\ the transformation from the reference element is the usual one for scalar functions, despite $V^t_h(\Omega)$ being constructed from $V^{2,v}_h(\Omega)$.  This defines the extension of the Charney-Phillips staggering to compatible finite element discretisations.

The reason for this construction is that it leads to a one-to-one
mapping between pressure and potential temperature in the hydrostatic pressure equation. To show this, we will describe a compatible finite element discretisation for the three-dimensional compressible Euler equations, and then present results about the hydrostatic balance. 

\subsection{Compatible finite element spatial discretisation for the
three-dimensional compressible Euler equations} 

In this section we present a finite element spatial discretisation for
the three-dimensional compressible Euler equations. Here we are mainly
addressing the question of how to obtain a consistent discretisation
of the various terms when compatible finite element spaces
with only partial continuity are used. Following the methodology of
\cite{wood2014inherently}, we consider the system of equations in
terms of the density $\rho$, and the potential temperature
$\theta$. These variables provide the opportunity for a better
representation of hydrostatic balance in the numerical model.  The equations are
\begin{align}
 u_t + (u\cdot\nabla)u + 2\Omega \times u & = -\theta\nabla \Pi - g\hat{r}, \\
\theta_t + u\cdot\nabla \theta & = 0, \\
\rho_t + \nabla\cdot(u \rho) & = 0,
\end{align}
where $u$ is the velocity, $\Omega$ is the rotation vector in the
Coriolis term, $g$ is the acceleration due to gravity, $\hat{r}$ is the
unit upward vector (which points away from the origin for a global sphere
domain), and $\Pi$ is a function of $\rho$ and $\theta$ given by
\begin{equation}\label{eq:pidef}
\Pi = \left(\frac{R_d \rho\theta}{p_0}\right)^\frac{\kappa}{1-\kappa}.
\end{equation}
Note that within the given formulation, the pressure can be determined via the equation of state $p = \rho R_d T$ combined with the definition of $\Pi$ in Equation~\eqref{eq:pidef} and the one of potential temperature in Equation~\eqref{eq:pottemp}.

To develop the discretisation for the advection term, we use the vector
invariant form
\begin{equation}
(u\cdot \nabla)u = (\nabla\times u)\times u + \nabla \frac{1}{2}|u|^2.
\end{equation}
Then, extending the development of the advection term from the shallow
water equations section, we obtain the weak form
\begin{equation}
\int_\Omega \nabla_h\times(u\times w)\cdot u \diff x - 
\int_\Gamma \{\{n\times (u\times w)\}\}\cdot \tilde{u}\diff S, \quad
\forall w \in V^1_h(\Omega),
\end{equation}
where we adopt the notation that
\begin{equation}
  \{\{n\times F\}\} = n^+\times F^+ + n^-\times F^-,
\end{equation}
where $\nabla_h$ indicates the ``broken'' gradient obtained by
evaluating the gradient pointwise in each cell, and where $\tilde{u}$
is the value of $u$ on the upwind side of each facet.

To develop the discretisation for the pressure gradient term, we
also integrate by parts in each cell to obtain
\begin{equation}
  \int_\Omega \nabla_h\cdot (w\theta)\Pi\diff x 
-\int_{\Gamma_v} \jump{w\theta}\{\Pi\}\diff S, \quad \forall w\in V^1_h(\Omega),
\end{equation}
where $\{\cdot\}$ denotes the average for scalars,
\begin{equation}
  \{f\} = \frac{1}{2}\left(f^++f^-\right).
\end{equation}
where the $\Gamma_v$ is set of all vertically aligned interior facets
(there are no jumps on horizontally aligned facets since
$\jump{w\theta}=0$ there. For lowest order RT elements on cuboids
it can be shown that this discretisation reduces to the finite difference
discretisation currently used in the Met Office Unified Model as described 
in \cite{wood2014inherently}.

Finally, we need to provide discretisations of the transport equations
for $\rho$ and $\theta$. Since $\rho\in V^3_h(\Omega)$ is discontinuous, we can
use the standard upwind Discontinuous Galerkin approach. Building a
transport scheme for $\theta \in V^t_h(\Omega)$ is more delicate, because
$\theta$ is continuous in the vertical. This means that there is no
upwind stabilisation for vertical transport, and so we choose to apply
an SUPG stabilisation there. We develop this in two steps. First we
take the $\theta$ advection equation, multiply by a test function and
integrate by parts, taking the upwind value of $\theta$ on the
vertical column faces, to get
\begin{equation}
\int_\Omega \gamma \theta_t \diff x - \int_\Omega \nabla_h \cdot (u\gamma)\,
\theta \diff x + \int_{\Gamma_v} \jump{\gamma u}\tilde{\theta} \diff S  = 0, 
\quad \forall \gamma \in V^t_h(\Omega).
\end{equation}
Then, to prepare for the SUPG stabilisation, we integrate by parts again,
to give the equivalent formula,
\begin{equation}
\int_\Omega \gamma \theta_t \diff x + \int_\Omega \gamma u\cdot \nabla \theta
\diff x + \int_{\Gamma_v} \jump{\gamma u}\tilde{\theta} - \jump{\gamma u \theta}\diff S  = 0, 
\quad \forall \gamma \in V^t_h(\Omega).
\end{equation}
Since there are now no derivatives applied to $\gamma$, we can obtain an SUPG discretisation by replacing $\gamma \to \gamma + \eta u\cdot\hat{r}\,\Delta t\, \hat{r}\cdot\nabla \gamma$, where $\eta>0$ is a constant upwinding coefficient, to obtain
\begin{align}\nonumber
\int_\Omega \left(\gamma + \eta u\cdot\hat{r}\Delta t \hat{r}\cdot\nabla \gamma\right) \theta_t \diff x + \int_\Omega \left(\gamma + \eta u\cdot\hat{r}\Delta t \hat{r}\cdot\nabla \gamma\right) u\cdot \nabla \theta
\diff x  & \\
\quad + \int_{\Gamma_v}  \jump{\left(\gamma + \eta u\cdot\hat{r}\Delta t \hat{r}\cdot\nabla \gamma\right) u}\tilde{\theta} - \jump{\left(\gamma + \eta u\cdot\hat{r}\Delta t \hat{r}\cdot\nabla \gamma\right) u \theta}\diff S  &= 0, 
\quad \forall \gamma \in V^t_h(\Omega).
\end{align}
This leads to the following spatial discretisation. We seek
$u\in V^2_h(\Omega)$, $\rho,\Pi \in V^3_h(\Omega)$, $\theta \in V^t_h(\Omega)$, such that
\begin{align}
\nonumber
\int_\Omega w\cdot u_t + \nabla\times(u\times w)\cdot u 
 - \int_\Gamma \jump{n\times(u\times w)}\cdot \tilde{u} \diff S & \\
\quad \nonumber
+ \int_\Omega w\cdot 2\Omega\times u
\diff x
- \int_\Omega \nabla_h\cdot (w\theta)\Pi \diff x & \\
\quad
+ \int_\Gamma \jump{w\theta}\{\Pi\}\diff s + \int_\Omega gw\cdot\hat{r}\diff x 
& = 0, \quad \forall w \in V^2_h(\Omega), \\
\nonumber
\int_\Omega \left(\gamma + \eta u\cdot\hat{r}\Delta t \hat{r}\cdot\nabla \gamma\right) \theta_t \diff x + \int_\Omega \left(\gamma + \eta u\cdot\hat{r}\Delta t \hat{r}\cdot\nabla \gamma\right) u\cdot \nabla \theta
\diff x  & \\
\quad + \int_{\Gamma_v}  \jump{\left(\gamma + \eta u\cdot\hat{r}\Delta t \hat{r}\cdot\nabla \gamma\right) u}\tilde{\theta} - \jump{\left(\gamma + \eta u\cdot\hat{r}\Delta t \hat{r}\cdot\nabla \gamma\right) u \theta}\diff S  &= 0, 
\quad \forall \gamma \in V^t_h(\Omega), \\
\int_\Omega \phi \rho_t \diff x - \int_\Omega \nabla_h \phi \cdot \rho \diff x 
+ \int_\Gamma \jump{\phi u}\tilde{\rho}\diff x & = 0, \quad
\forall \phi \in V^3_h(\Omega), \\
\int_\Omega \phi \Pi \diff x - \int_\Omega \phi
\left(\frac{R_d \rho\theta}{p_0}\right)^{\frac{\kappa}{1-\kappa}}\diff x & = 0,
\quad \forall \phi \in V^3_h(\Omega).
\end{align}
Some initial numerical results using this spatial discretisations are provided in Section \ref{sec:slice}.

\subsection{Vertical slice results}\label{sec:slice}
In this section we present some preliminary results obtained using the
discretisations outlined above, in vertical slice $(x,z)$
geometry. The finite element spaces are $(CG_2, RT_1, DG_1)$. The
timestepping scheme uses the semi-implicit formulation described in
Section \ref{sec:semi-implicit} along with a third order, three stage
explicit SSPRK scheme for the advection terms. The tests  were performed using the Firedrake software suite (see \cite{Rathgeber2015}), which allows for symbolic implementation of mixed finite element problems. As the hybridisation
technique is not currently implemented within Firedrake, we instead solve the linear system by eliminating $\theta$, solving the mixed system for $u$ and $p$, then reassembling $\theta$.

The first test case is a nonhydrostatic gravity wave, initially
described in \cite{skamarock1994efficiency} for the Boussinesq
model. The test describes the evolution of inertia gravity waves
excited by a localised, warm $\theta$ perturbation to the background
state of constant buoyancy frequency. Figure \ref{fig:slice} (top)
shows the final $\theta$ perturbation which compares well with results
produced by other models (see for example \cite{melvin2010inherently}).

The second test case describes the evolution of a cold bubble as it
falls, meets the bottom of the domain, spreads and produces
Kelvin-Helmholtz rotors due to the shear instability. The background
state in this case is isentropic. Since the purpose of this test is to
study convergence to a well-resolved solution, viscosity and diffusion
terms are included, the discretisation of which is achieved by using
the interior penalty method. Figure \ref{fig:slice} (bottom) shows the
final state of a well resolved simulation, showing that we capture the
correct position of the front and the details of the instability
\cite{melvin2010inherently}.

Much more comprehensive results in the vertical slice
setting will be presented in a forthcoming paper.

\begin{figure}
  \centerline{
    \includegraphics[width=12cm]{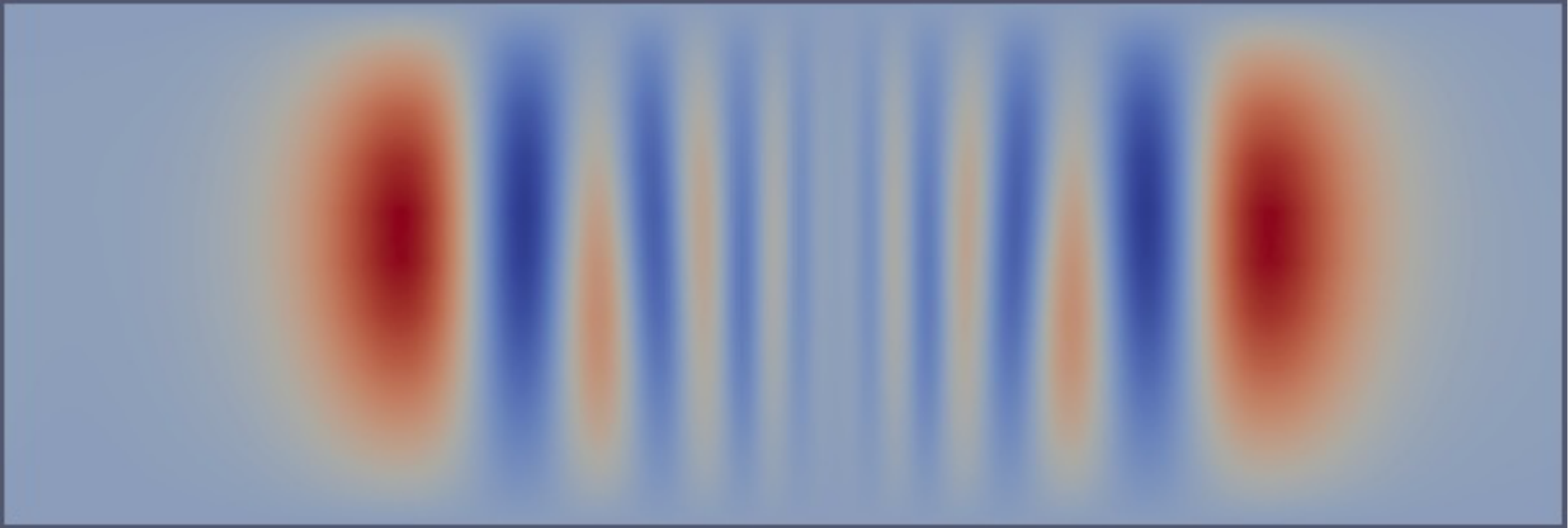}
    }\centerline{
    \includegraphics[width=12cm]{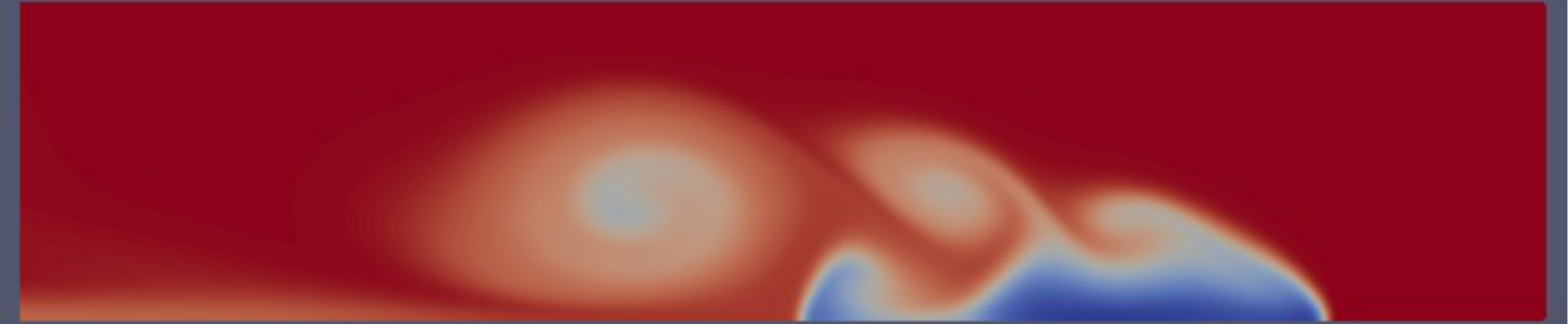}
  }
  \caption{\label{fig:slice}Some initial numerical results using the
    compatible finite element discretisation for the compressible
    Euler equations in a slice geometry (courtesy: Jemma
    Shipton). Top: A potential temperature colour plot showing a
    snapshot from the Skamarock-Klemp gravity wave
    \cite{skamarock1994efficiency} testcase. Bottom: A potential
    temperature colour plot showing a snapshot from the Straka dense
    bubble \cite{straka1993numerical} testcase. }
\end{figure}

\subsection{Hydrostatic balance}
In this section we discuss the hydrostatic balance properties of this
discretisation. It is critical that the basic state of hydrostatic
balance is correctly represented in the numerical model. This means
that there should be a one-to-one correspondence between potential
temperature $\theta$ and $\Pi$ in the state of hydrostatic balance;
this precludes spurious hydrostatic modes from emerging in numerical
simulations close to a hydrostatic state. To obtain the hydrostatic
balance equation, we consider test functions $w$ from the vertical
part $V^{2,v}_h(\Omega)$ of the velocity space $V^2_h(\Omega)$. After neglecting
the advection terms as well as any non-traditional contribution to the
Coriolis force, we obtain
\begin{equation}
  0 = -\int_\Omega \nabla_h\cdot(\theta w)\, \Pi \diff x
  - \int_\Omega w\cdot\hat{r}g \diff x, \quad \forall w \in V^{2,v}_h(\Omega).
\end{equation}
Note that whilst $w$ is written as a vector, it always points in the
$\hat{r}$ direction, since it is in $V^{2,v}_h(\Omega)$. This means that this is
really a scalar equation for the vertical derivative of $\Pi$. It also
explains why the vertical facet integrals have vanished from the
expression. In a domain with level topography, vectors in the
horizontal space $V^{2,h}_h(\Omega)$ do not have a component in the $\hat{r}$
direction, hence no horizontal motion is generated if this equation is
satisfied. However, in terrain-following coordinates, there will be
projection of the pressure gradient of $\Pi$ into the horizontal
direction; this is exactly analogous to the pressure gradient errors observed over topography observed in the C-grid staggered finite difference method.  We also observe that this equation decomposes into
independent equations for each vertical column $c$. We obtain
\begin{equation}
  \label{eq:hydrostatic}
  0 = -\int_c \nabla_h\cdot(\theta w) \Pi \diff x
  - \int_c w\cdot\hat{r}g \diff x, \quad \forall w \in V^{2,v}_h(c),
\end{equation}
for each column $c$ in the mesh. Note here that we are considering
rigid lid boundary conditions; $V^{2,v}_h(\Omega)$ is defined so that $w\cdot
n=0$ for all $w\in V^{2,v}_h(\Omega)$.

We now prove the following two results that show the degree to which
there is a one-to-one correspondence between $\Pi$ and $\theta$
through this equation.
\begin{theorem}
  Let $V^t_h(c)$ be the restriction of $V^t_h(\Omega)$ to column $c$, and let
  $V^{2,v}_h(c)$, $V^{3}(c)$ be similar restrictions for
  $V^{2,v}_h(\Omega)$ an $V^3_h(\Omega)$, respectively. Further, define $\bar{V}^3_h(c)$
  as \begin{equation}
    \bar{V}^3_h(c) = \left\{\rho \in V^3_h(c)\, :\,\int_c \rho \diff x = 0\right\},
  \end{equation}
  i.e.\ the subspace of functions with zero column averages.
  
  For all $\theta\in V^t_h(c)$ satisfying $\theta>0$,
  there is a unique solution $\Pi\in \bar{V}^3_h(c)$ satisfying Equation \eqref{eq:hydrostatic}.
\end{theorem}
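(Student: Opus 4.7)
The plan is to reduce the hydrostatic problem to a discrete one-dimensional problem in the vertical direction, which becomes well-posed thanks to the positivity of $\theta$. The problem is finite-dimensional, so it suffices to prove a discrete inf-sup condition for the bilinear form $b_\theta(w,\Pi) := -\int_c \nabla_h\cdot(\theta w)\,\Pi\,\diff x$ on $V^{2,v}_h(c)\times\bar{V}^3_h(c)$; existence then follows from uniqueness by dimension counting, provided $\dim V^{2,v}_h(c) = \dim \bar{V}^3_h(c)$. This identity is transparent in the lowest-order case and, more generally, requires reading the zero column-average constraint as one constraint per horizontal degree of freedom (\ie projecting out the full space of $z$-independent functions in $V^3_h(c)$).

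The first step is a careful integration by parts. Every $w\in V^{2,v}_h(c)$ has the form $w = w_z\hat{r}$ with $w_z$ lying in the scalar space obtained by stripping $\hat{r}$ from $V^{2,v}_h$; this scalar space is continuous in the vertical and vanishes at the top and bottom of $c$ by the rigid-lid assumption. Because $w\cdot n$ vanishes identically on vertical facets and $w_z$ vanishes at the domain boundary, a cell-by-cell calculation yields
\begin{equation*}
b_\theta(w,\Pi) \;=\; \int_c \theta\,w_z\,\partial_z^h\Pi \,\diff x \;+\; \sum_{F\in\Gamma_h^{\mr{int}}(c)} \int_F \theta\,w_z\,\jump{\Pi}\,\diff S,
\end{equation*}
with $\partial_z^h$ the broken vertical derivative and $\Gamma_h^{\mr{int}}(c)$ the set of interior horizontal facets of $c$.

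To prove uniqueness, I would set $g=0$ and show that vanishing of the above expression for all test functions forces $\Pi=0$. Using the tensor product structure, write $w_z = p(x_h)\,\zeta(z)$ with $p$ a horizontal basis function of the appropriate discontinuous space and $\zeta$ a nodal basis function in the one-dimensional CG space with Dirichlet conditions. Taking $\zeta$ to be supported near the lowest internal horizontal facet $F_1$, the rigid-lid condition at the bottom of $c$ together with positivity of $\theta$ pins down $\jump{\Pi}|_{F_1}$ and the within-cell slopes of $\Pi$ in the bottom cell in terms of zero initial data. Sweeping upward facet-by-facet yields a triangular system that forces all jumps and all broken vertical derivatives of $\Pi$ to vanish, so $\Pi$ is $z$-independent; the column-average constraint built into $\bar{V}^3_h(c)$ then rules out any nonzero vertically constant function and gives $\Pi=0$.

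The main obstacle is the last uniqueness step in the higher-order case, where the horizontal discontinuous space has dimension greater than one and the product $\theta\,p$ couples different horizontal modes. The cleanest remedy is to recast the argument as injectivity of the linear map $w_z\mapsto \mr{proj}_{V^3_h}\partial_z(\theta w_z)$ between finite-dimensional spaces of matching dimension, which after the IBP above reduces to a family of weighted one-dimensional problems parameterised by horizontal modes with strictly positive weights $\theta(x_h,\cdot)$. The inf-sup constant obtained this way degenerates as $\min_c\theta\to 0$, confirming that the hypothesis $\theta>0$ is essential and cannot be relaxed to mere non-negativity.
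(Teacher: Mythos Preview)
Your approach differs from the paper's. Rather than working directly with $b_\theta$, the paper introduces an auxiliary unknown $v\in V^{2,v}_h(c)$ and recasts the hydrostatic equation as the saddle-point system
\begin{align*}
\int_c w\cdot v\,\diff x + \int_c \nabla\cdot(\theta w)\,\Pi\,\diff x &= -\int_c g\,w\cdot\hat{r}\,\diff x \quad \forall w\in V^{2,v}_h(c),\\
\int_c \phi\,\nabla\cdot(\theta v)\,\diff x &= 0 \quad \forall \phi\in V^3_h(c),
\end{align*}
arguing that the constraint forces $v=0$ (since $\partial_z(\theta v_z)=0$ with $v_z$ vanishing at top and bottom gives $v_z=0$), and then appealing to Brezzi's stability conditions for the $\theta$-weighted divergence. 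Both routes ultimately hinge on the same fact---that $b_\theta(w,\Pi)=0$ for all $w$ forces $\Pi$ to be $z$-independent---but the paper packages existence and uniqueness together via the abstract mixed theory, whereas you prove uniqueness by hand and then recover existence by dimension matching.

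Your dimension observation is genuinely sharper than the paper's statement: when the horizontal discontinuous factor has dimension $d_h>1$, the single constraint $\int_c\Pi\,\diff x=0$ fails to remove all $z$-independent functions from $V^3_h(c)$, so your reinterpretation (one constraint per horizontal mode) is what both arguments actually need. The gap in your proposal lies in the ``cleanest remedy'': the claim that the problem decouples into independent weighted one-dimensional problems indexed by horizontal modes is not correct, because multiplication by $\theta(x_h,z)$ couples the horizontal basis functions. Injectivity of $w_z\mapsto\mr{proj}_{V^3_h}\partial_z(\theta w_z)$ therefore does not follow mode-by-mode; you would need to make the sweep rigorous (at each vertical level the relevant nodal values are determined by a $\theta$-weighted horizontal mass matrix, which is SPD since $\theta>0$), or else revert to the saddle-point formulation and verify the inf-sup directly.
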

\begin{proof}
  First we show that Equation \eqref{eq:hydrostatic} can be
  reformulated as a coupled system of equations. We first note that if
  $v\in {V}^{2,v}_h(c)$, then $\nabla\cdot (\theta v)=\pp{}{r}(\theta
  v)$, where $\pp{}{r}$ is the vertical derivative). Therefore, if
  $\nabla\cdot (\theta v)=0$ then $v=0$, since $v\cdot \hat{r}=0$ at
  the top and bottom of the column. This means that we can add a term
  involving $v$ to Equation \eqref{eq:hydrostatic} without changing
  the solution. Hence, we obtain the equivalent problem of finding
  $v\in V^{2,v}_h(c)$, $\Pi\in V^3_h(c)$ such that
  \begin{align}
    \int_c w\cdot v \diff x + \int_c \nabla\cdot(\theta w) \Pi \diff x
    & = -\int_c gw\cdot\hat{r} \diff x, \quad \forall w \in V^{2,v}_h(c), \\
    \int_c \phi \nabla\cdot (\theta v) \diff x & = 0, \quad \forall \phi \in V^{3}_h(c).
  \end{align}
  This defines a mixed finite element problem in each vertical column, of
  the form
  \begin{align}
    a(w,v) + b(w, \Pi ) & = F(w), \quad \forall w\in V^{2,v}_h(c), \\
    b(v,\phi) & = 0, \quad \forall \phi \in V^3_h(c),
  \end{align}
  with the modified derivative operator $v\mapsto \nabla\cdot(\theta v)$,
  for which Brezzi's stability conditions can be easily verified.
\end{proof}
Similar results are straightforwardly obtained by considering slip
boundary conditions on the bottom surface and constant pressure
boundary conditions on the top surface. 
\begin{theorem}
  Let $\Pi\in\bar{V}^3_h(c)$, and let $\mathring{V}^t_h(c)$ be the subspace
  of $V^t_h(c)$ given by
  \begin{equation}
    \mathring{V}^t_h(c) = \left\{
    \theta \in V^t_h(c):\theta|_{top}=\theta|_{bottom}=0
    \right\},
  \end{equation}
  where $\theta|_{top}$ and $\theta|_{bottom}$ are the values of
  $\theta$ restricted to the top and bottom surface respectively. Let
  $\Pi$ be such that $\hat{r}\cdot\jump{\Pi}>0$ on $\Gamma_c$, where
  $\Gamma_c$ is the set of interior facets in the column $c$. Further,
  if $V^3_h(\Omega)$ is degree $p\geq 1$ in the vertical direction,
  then let $\pp{\Pi}{z}>0$ in each cell.  Then Equation
  \eqref{eq:hydrostatic} has a unique solution for $\theta \in
  \mathring{V}^t_h(c)$.
\end{theorem}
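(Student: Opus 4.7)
The plan is to recast the hydrostatic balance equation as a square linear system on $\mathring{V}^t_h(c)$ and reduce everything to a coercivity argument, so that existence and uniqueness follow together from injectivity. The first step is to observe that $V^{2,v}_h(c)$ and $\mathring{V}^t_h(c)$ are naturally isomorphic: every test function $w\in V^{2,v}_h(c)$ has the form $w=\beta\hat{r}$, and the rigid-lid condition $w\cdot n=0$ on the top and bottom of the column forces $\beta$ to vanish there, so $\beta\in\mathring{V}^t_h(c)$. Consequently, Equation~\eqref{eq:hydrostatic} is a square linear problem in $\theta$, and existence follows from injectivity.

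The second step is to rewrite the bilinear form $a(\theta,w):=-\int_c \nabla_h\cdot(\theta w)\,\Pi \diff x$ in a form that exposes the hypotheses on $\Pi$. For $w=\beta\hat{r}$ the broken divergence collapses cell-by-cell to $\partial_z(\theta\beta)$; integrating by parts in the vertical within each cell, using that $\theta,\beta\in V^t_h$ are continuous in $z$ while $\Pi$ jumps across the interior horizontal facets in $\Gamma_c$, and discarding the outer boundary contributions (since $\beta=0$ at top and bottom of $c$), I expect to obtain
\begin{equation}
 a(\theta,\beta\hat{r}) = \int_c \theta\beta\,\pp{\Pi}{z} \diff x + \sum_{f\in\Gamma_c}\int_f \theta\beta\,(\hat{r}\cdot\jump{\Pi})\diff S,
\end{equation}
after checking that the convention for the scalar-valued jump $\jump{\Pi}$ combines constructively with the sign of the bulk derivative term.

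To prove injectivity I would suppose $a(\theta,w)=0$ for every $w\in V^{2,v}_h(c)$ and test with $w=\theta\hat{r}$, which is admissible because $\theta\in\mathring{V}^t_h(c)$. This gives
\begin{equation}
 0 = \int_c \theta^2\,\pp{\Pi}{z} \diff x + \sum_{f\in\Gamma_c}\int_f \theta^2\,(\hat{r}\cdot\jump{\Pi})\diff S,
\end{equation}
in which both summands are nonnegative under the hypotheses, so each vanishes separately. When the vertical degree satisfies $p\geq 1$, the pointwise strict positivity $\partial_z\Pi>0$ in each cell forces $\theta\equiv 0$ directly from the bulk term. When $p=0$ the bulk term is trivially zero, but in that case $V^t_h$ reduces to $CG_1$ in the vertical; the facet sum forces $\theta=0$ on every interior horizontal interface, and combined with the boundary conditions $\theta|_{\mathrm{top}}=\theta|_{\mathrm{bottom}}=0$, linear interpolation on each vertical subinterval yields $\theta\equiv 0$.

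The main obstacle I anticipate is the sign bookkeeping in the cell-by-cell integration by parts: one must verify that the paper's convention for $\jump{\Pi}$ combines with the vertical derivative in such a way that the bulk and interface contributions reinforce, rather than cancel, so that the tested identity is genuinely a sum of nonnegative terms. A secondary but structural subtlety is the low-order case $p=0$, where bulk coercivity is lost and uniqueness has to be extracted from the interface terms together with the continuous finite element structure of $V^t_h$ in the vertical; this closes cleanly in the tensor-product construction of Section~\ref{sec:tfamilies}, where $V^3_h$ being piecewise constant in the vertical corresponds exactly to $V^t_h$ being $CG_1$ in the vertical, matching the number of vertical degrees of freedom to the number of constraints supplied by the facet and boundary conditions.
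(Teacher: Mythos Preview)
Your approach is essentially the paper's: integrate by parts in the vertical to expose a bilinear form that is positive definite under the stated sign hypotheses on $\Pi$, then conclude existence and uniqueness together. There is, however, one technical slip in the parametrisation. The identification $w=\beta\hat r$ with $\beta\in\mathring{V}^t_h(c)$ is only exact for columns built from affine maps. In general $V^{2,v}_h$ is defined via the Piola transform while $V^t_h$ uses the scalar pullback, so the correct isomorphism carries a Jacobian weight: the paper writes $w=J\hat r\,\gamma$ for $\gamma\in\mathring{V}^t_h(c)$, and after integration by parts obtains
\[
A(\gamma,\theta)=\int_c \kappa\,\gamma\theta\,\pp{\Pi}{z}\bigg|_h\diff x+\int_{\Gamma_c}\kappa\,\gamma\theta\,\hat r\cdot\jump{\Pi}\diff S,\qquad \kappa=\hat r\cdot J\hat r>0.
\]
Your test function $w=\theta\hat r$ is therefore not guaranteed to lie in $V^{2,v}_h(c)$ on terrain-following cells; testing instead with $w=J\hat r\,\theta$ repairs this, and your positivity argument then goes through verbatim with the harmless extra factor $\kappa$.

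Your separate handling of the $p=0$ case (bulk term vanishes, uniqueness recovered from the facet terms plus the $CG_1$ vertical structure of $V^t_h$) is in fact more explicit than the paper, which simply declares $A$ an inner product without isolating this degenerate situation.
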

\begin{proof}
  Consider the reference surface mesh from which the extruded mesh
  $\Omega$ was constructed (either a mesh of a planar surface, or a
  mesh approximating the sphere). For a given column $c$, consider the
  base surface cell on the surface mesh corresponding to $c$, and
  construct an affine reference volume cell $\hat{e}$ which is of unit
  height and is extruded orthogonal to the base cell.

  Let $g_e$ be the transformation from $\hat{e}$ into
  each cell $e$ in the column.
  Then, we note that we may write $w\in V^{2,v}_h(c)$ as
  \begin{equation}
    w = J\hat{r}\gamma, \quad \gamma \in \mathring{V}^t_h(c),
  \end{equation}
  where $J$ is the Jacobian defined by
  \begin{equation}
    J|_e = \pp{g_e}{\hat{x}}\circ g^{-1}_e,
  \end{equation}
  in each cell $e$. Then Equation \eqref{eq:hydrostatic} becomes
  \begin{equation}
   A(\gamma,\theta) = -\int_c \pp{}{z}\left(\gamma \theta \kappa
    \right)\Pi \diff x = -\int_c \gamma \kappa g\diff x, \quad \forall \gamma \in V^t_h(c),
  \end{equation}
  where $\kappa=\hat{r}\cdot J\hat{r}$. We assume a non-degenerate
  mesh so that $c_0<\kappa <c_1$ for positive constants $c_0$,
  $c_1$. We then apply integration by parts to obtain
  \begin{equation}
    A(\gamma,\theta) = \int_c \kappa\gamma\theta \pp{}{z}\bigg|_h \Pi \diff x
    + \int_\Gamma \gamma\theta \kappa \hat{r}\cdot \jump{\Pi}\diff S.
  \end{equation}
  Under our assumptions on $\Pi$, $A(\gamma,\theta)$ defines an inner
  product on $\mathring{V}^t_h(c)$, and therefore the solution exists and is unique.
\end{proof}
This result states that $\theta$ is unique up to a choice of values at
the boundary. This is the same situation as we have in the C-grid
staggered finite difference discretisation.
 
\section{Summary and outlook}
\label{sec:sum}

In this survey article, we reviewed recent work on the application of compatible finite element spaces to geophysical fluid dynamics. In particular, we concentrated on analytic results that provide information about the behaviour of these discretisations when applied to geophysically-relevant problems. We also contributed some extra results, including the analysis of inertial modes for the linear shallow water equations, where the conclusion is that there are no spurious inertial modes.

The most substantial contribution is the analysis of the approximation properties of some classes of tensor product finite element spaces that are compatible with the FEEC framework; these spaces have been proposed for the development of global atmosphere models. We applied to such spaces the results of \cite{Arnold14} that state that if the mesh is generated by an arbitrary multilinear transformation one can generally expect a loss in the convergence rate in the $L^2$ norm, which depends on the way the spaces transform under coordinate transformations.
In particular, such convergence loss is more severe as $k$ increases, where $k$ is the order of the space considered as part of the de Rham complex. For example, if the mesh is generated using a prismatic reference element with tensor product polynomial spaces $\mc{E}_{r,r}V^k$, we found that:
\begin{itemize}
\item for $H(\mr{curl})$ elements ($k=1$)  the expected convergence rate is $\lfloor r/2\rfloor$; 
\item for $H(\mr{div})$ elements ($k=2$)  the expected convergence rate is $\lfloor r/2\rfloor-1$;  
\item for $L^2$ elements ($k=3$)  the expected convergence rate is $\lfloor r/2\rfloor-2$. 
\end{itemize}
However, if the mesh is obtained via a sufficiently regular
global transformation as detailed in the statement of
Theorem~\ref{th:gtransf}, one can retrieve optimal convergence
rates independently of $k$. In practice, this implies that tensor product finite element
spaces on prisms, as derived from classical mixed space such as
Raviart-Thomas or Brezzi-Douglas-Marini spaces \cite{Brezzi91}, can
still produce optimal convergence rates even when considered under
non-affine transformations. This is the case, for example, for the
spherical shell problem, although generalisations of the approach
presented here can be easily adapted to more accurate representations
of the Earth surface. 

Finally, we provided some new results analysing
the hydrostatic balance when a particular choice of temperature space
is used, which is compatible with the vertical part of the velocity
space.  These results showed that, under suitable assumptions of
static stability, a hydrostatic pressure can be uniquely determined
from a given temperature profile and vice versa. All of these theoretical
results are underpinning ongoing model development, and detailed numerical
studies and discussion of computational performance will be explored
in future papers.

\bibliographystyle{plain}
\bibliography{ref}

\end{document}